\newtheorem{thm}{Theorem}
\newtheorem{lem}{Lemma}
\newtheorem{conj}{Conjecture}
\newcounter{counter}
\def\less{\backslash}
\begin{document}
\title{On Hypergraph Lagrangians and Frankl-F\"uredi's Conjecture}
\author{
Hui Lei
\thanks{Center for Combinatorics and LPMC, Nankai University, Tianjin 300071, China,
   ({\tt hlei@mail.nankai.edu.cn}). This author was supported in part by National Natural Science Foundation of China (No. 11771221).}
\and
Linyuan Lu
\thanks{University of South Carolina, Columbia, SC 29208,
({\tt lu@math.sc.edu}). This author was supported in part by NSF
grant DMS 1600811.}
}
\maketitle

\begin{abstract}
Frankl and F\"uredi conjectured in 1989 that the maximum Lagrangian, denoted by $\lambda_r(m)$, among all $r$-uniform hypergraphs of fixed size $m$ is achieved by the minimum hypergraph $C_{r,m}$ under the colexicographic order.
We say $m$ in {\em principal domain} if there exists an integer $t$ such that
${t-1\choose r}\leq m\leq {t\choose r}-{t-2\choose r-2}$. If $m$ is in the principal domain,
then Frankl-F\"uredi's conjecture has a very simple expression:
$$\lambda_r(m)=\frac{1}{(t-1)^r}{t-1\choose r}.$$
Many previous results are focusing on $r=3$. For $r\geq 4$,
Tyomkyn in 2017 proved that Frankl-F\"{u}redi's conjecture holds whenever ${t-1\choose r} \leq m \leq
{t\choose r} -{t-2\choose r-2}- \delta_rt^{r-2}$ for a constant $\delta_r>0$. In this paper, we improve
Tyomkyn's result by showing Frankl-F\"{u}redi's conjecture holds whenever ${t-1\choose r} \leq m \leq
{t\choose r} -{t-2\choose r-2}- \delta_r't^{r-\frac{7}{3}}$ for a constant $\delta_r'>0$.
\end{abstract}

\section{Introduction}
The Lagrangians of hypergraphs are closely related to the Tur\'an densities in the extremal hypergraph theory.
Given an $r$-uniform hypergraph $H$ on a vertex set $[n]:=\{1,2,\ldots,n\}$,  the Lagrangian of $H$, denoted by $\lambda(H)$,  is defined to  be
$$\lambda(H)=\max_{{\bf x}\in \mathbb{R}^n_+: \|{\bf x}\|_1=1}\!\!\!
\sum_{\{i_1,i_2,\ldots, i_r\}\in E(H)} x_{i_1}x_{i_2}\cdots x_{i_r},$$
where the maximum is taken over on a simplex $\{x\in \mathbb{R}^n \colon x_1,\ldots, x_n\geq 0,
\mbox{ and } \sum_{i=1}^n x_i=1\}$. A maximum point ${\vec x}_0$ is called an {\em optimal
legal weighting} and the set of its nonzero coordinates in ${\vec x}_0$ is called a {\em support}
of $H$ (see section 2 for details.)
 One can show that $r! \cdot \lambda(H)$ is the supremum of
edge densities among all hypergraphs which are blow-ups of $H$. It has important applications in the Tur\'an theory.

The concept of Lagrangians of graphs was introduced by Motzkin and Straus \cite{MS1965} in 1965, who proved that $\lambda(H)=\frac{1}{2}\left(1-\frac{1}{\omega(H)}\right)$,
where $\omega(H)$ is the clique number of a graph $H$. Their theorem implies Tur\'an's theorem.

 Let $\lambda_r(m)$ be the maximum of Lagrangians among all $r$-uniform hypergraphs with $m$ edges.
 Then Motzkin-Straus' result implies $\lambda_2(m)=\frac{1}{2}\left(1-\frac{1}{t-1}\right)$ if ${t-1\choose 2}\leq m<{t\choose 2}$
 for some integer $t$.

 For any $r\geq 2$ and any $m\geq 1$, let $C_{r,m}$ be the
 $r$-uniform hypergraph consisting of the first $m$ sets in ${\mathbb{N} \choose r}$ in the colexicographic order (that is $A<B$ if $\max(A\Delta B)\in B$.)
 For example, for $r=3$, the first 5 triple sets under the 
 colexicographic order are given below:
 $$\{1,2,3\}<\{1,2,4\}<\{1,3,4\}<\{2,3,4\}<\{1,2,5\}<\cdots$$
 so $C_{3,5}$ is the hypergraph on $5$ vertices with the following edge set
 $$E(C_{3,5})=\{\{1,2,3\},\{1,2,4\},\{1,3,4\},\{2,3,4\},\{1,2,5\}\}.$$

 One can easy to check that if $m={t\choose r}$ for some integer $t$, then
 $C_{r,m}$ is just the complete $r$-uniform hypergraph $K^r_t$ (or $[t]^{(r)}$
 under Talbot's notation \cite{T2002}.)

 In 1989, Frankl and F\"{u}redi made the following conjecture:
 \begin{conj}[{\cite{FF1989}}] \label{conjecture}
   For all $r\geq 3$ and any $m\geq 1$, we have $\lambda_r(m)\leq \lambda(C_{r,m}).$
 \end{conj}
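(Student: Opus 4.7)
My plan is to prove the conjecture by combining compression arguments with induction on $m$. Let $H$ be an $r$-uniform hypergraph with $m$ edges that maximizes $\lambda$, and let $\vec{x}$ be its optimal weighting. First I would show, via the standard compression operation, that one may assume $H$ is \emph{left-compressed}: whenever $e \in E(H)$ contains $j$ and some $i<j$ with $i \notin e$, the edge $(e\setminus\{j\})\cup\{i\}$ also lies in $E(H)$. The argument is to shift weight between pairs of vertices in the direction of larger partial derivative while simultaneously swapping uncompressed edges for their compressed counterparts, and to check that both operations are nondecreasing for $\lambda$.

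Once $H$ is left-compressed, the Lagrange multiplier condition $\partial_v \lambda(H;\vec{x}) = r\lambda(H)$ for every $v$ in the support of $\vec{x}$, combined with compression, forces the support to be an initial segment $[s]$ and further restricts the structure of $H|_{[s]}$. I would then induct on $m$, with base cases $m \leq r+1$ and $m=\binom{s}{r}$ handled directly (the latter by Motzkin--Straus-type identities for complete hypergraphs). For the inductive step, if a left-compressed maximizer $H$ is not equal to $C_{r,m}$, then there exist $f \in E(H)\setminus E(C_{r,m})$ and $e \in E(C_{r,m})\setminus E(H)$ with $e$ preceding $f$ in colex order; I would compare $\lambda(H)$ with $\lambda(H')$ for $H'=(E(H)\setminus\{f\})\cup\{e\}$, and try to derive a contradiction by bounding $\lambda(H\setminus\{f\})$ and $\lambda(H'\setminus\{e\})$ via the inductive hypothesis.

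The main obstacle is precisely this colex-swap step, which has been the bottleneck of the conjecture for over three decades. The difference $\lambda(H')-\lambda(H)$ is controlled by the weights that the optimum places on the vertices of the symmetric difference $e\triangle f$, yet those weights are determined by $H$ itself through a coupled fixed-point equation. When $m$ lies in the principal domain defined in the abstract, $C_{r,m}$ is a complete hypergraph plus an initial block of links, the weights can be pinned down cleanly, and the inductive swap closes; this is the range Tyomkyn and the present paper address. Outside the principal domain, $C_{r,m}$ has a nested recursive structure, the colex-next edge can live far from the support, and the swap involves vertices whose optimal weights are inaccessible without a finer structural theorem. Closing this gap would require a genuinely new ingredient---most plausibly a global smoothing or monotonicity lemma that controls $\lambda$ under \emph{all} colex-adjacent edge swaps simultaneously---and supplying such a tool is the step I expect to be the true hard core of any complete proof.
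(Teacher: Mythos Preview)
The statement you are attempting to prove is labeled \textbf{Conjecture} in the paper, and the paper does \emph{not} prove it; it remains open. The paper's contribution is the partial result Theorem~\ref{mainthm}, which verifies the conjecture only for $m$ in the principal domain up to a gap of order $t^{r-7/3}$. So there is no ``paper's own proof'' of this statement to compare against.

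Your proposal is candid about this: you explicitly identify the colex-swap step as ``the bottleneck of the conjecture for over three decades'' and say that closing it ``would require a genuinely new ingredient'' that you do not supply. That is an accurate assessment, but it means what you have written is a sketch of a known strategy together with an explanation of where it stalls, not a proof. The compression reduction and the Lagrange-multiplier identities you cite are standard (they appear in the paper as Lemma~\ref{four}), and the induction-via-colex-swap idea is natural, but as you yourself note, the comparison $\lambda(H')\geq\lambda(H)$ after a single colex swap is exactly the step no one knows how to carry out in general. Outside the principal domain the optimal weighting of $C_{r,m}$ is governed by a recursive system with no closed form, and there is no known monotonicity lemma controlling $\lambda$ under arbitrary colex-adjacent swaps. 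Until such a tool exists, this outline does not constitute a proof of the conjecture.
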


 Talbot \cite{T2002} pointed out that $\lambda(C_{r,m})$ remains  a constant
 ($\lambda(C_{r,m})\equiv{t-1\choose r}/(t-1)^r$)
 for  $m\in [{t-1\choose r}, {t\choose r}-{t-2\choose r-2}]$, and jumps for every
 $m\in [{t\choose r}-{t-2\choose r-2}, {t\choose r}]$.
   Tyomkyn called $m={t\choose r}$
   the principal case. Here we refer the interval $[{t-1\choose r}, {t\choose r}-{t-2\choose r-2}]$ as the {\em principal domain}, and refer $m={t\choose r}-{t-2\choose r-2}$, the {\em critical case}. Most partial results on Frankl-F\"{u}redi's conjecture occur in the principal domain.

   For $r=3$, Talbot \cite{T2002} proved that Conjecture \ref{conjecture} holds whenever ${{t-1}\choose3}\leq m\leq {t\choose 3}-{{t-2}\choose 1}-(t-1)={t\choose3}-(2t-3)$ for some $t\in \mathbb{N}$.
   Tang, Peng, Zhang and Zhao \cite{TPZZ2016}
   extended the interval to ${{t-1}\choose3}\leq m\leq {t\choose 3}-{{t-2}\choose 1}-\frac 1 2(t-1)$.
   Recently, Tyomkyn \cite{T2017} further extended the interval to
   $[{{t-1}\choose 3}, {t\choose 3}- {t-2\choose 1}-\delta_3t^{3/4}]$ for some constant $\delta_3>0$.
   These results can be rephrased in term of the gap (i.e.  the number of missed values)
   in the principal domain: the gap drops from $t-1$, to $\frac{1}{2}(t-1)$, and further to $O(t^{3/4})$. Recently,  Lei, Lu, and Peng \cite{LLP} further reduced the gap to $O(t^{2/3})$.

   For $r\geq 4$, there are fewer results than at $r=3$. In 2017, Tyomkyn \cite{T2017} proved
   the following theorem.
 \begin{thm}[\cite{T2017}]\label{3/4}
 For $r\geq 4$,  there exists a constant $\delta_r>0$ such that
    for any $m$ satisfying ${{t-1}\choose r}\leq m\leq {t\choose r}-{t-2\choose r-2}-\delta_rt^{r-2}$ we have
    $$\lambda_r(m)=\frac{{t-1\choose r}}{(t-1)^r}.$$
 \end{thm}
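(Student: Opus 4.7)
The approach is to analyze an optimal legal weighting $\vec{x}$ of $H$ and case-split on the size of its support $S$. Let $H$ be an $r$-uniform hypergraph with $m$ edges in the given range; we wish to prove $\lambda(H) \le \binom{t-1}{r}/(t-1)^r$. Choose $\vec{x}$ attaining $\lambda(H)$ with support $S$, and split into the cases $|S| \le t-1$, $|S| \ge t+1$, and $|S| = t$. A preliminary normalization (standard left-compression, which does not decrease the Lagrangian) may be used to put the missing edges of $H$ into a canonical form.

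If $|S| \le t-1$, then $\lambda(H) \le \lambda(K^r_{|S|}) = \binom{|S|}{r}/|S|^r$, which by the monotonicity of $\binom{s}{r}/s^r$ in $s$ is at most $\binom{t-1}{r}/(t-1)^r$. If $|S| \ge t+1$, the subhypergraph $H[S]$ has at most $m$ edges, leaving at least $\binom{|S|}{r} - m \ge \binom{t+1}{r} - \binom{t}{r} + \binom{t-2}{r-2}$ missing edges relative to $K^r_{|S|}$, a positive fraction of $\binom{|S|}{r}$; a quantitative stability estimate for the Lagrangian of a near-complete hypergraph then yields $\lambda(H) < \binom{t-1}{r}/(t-1)^r$ strictly.

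The critical case is $|S| = t$. Here $H \subseteq K^r_t$ and $\bar H := K^r_t \setminus H$ satisfies $|\bar H| \ge \binom{t-2}{r-2} + \delta_r t^{r-2}$. The plan is to exploit the Lagrange multiplier conditions
$$\sum_{e \in H,\; e \ni i} \prod_{j \in e \setminus \{i\}} x_j \;=\; r\,\lambda(H) \qquad \text{for each } i \in [t],$$
together with the identity $\lambda(H) = e_r(\vec{x}) - f_{\bar H}(\vec{x})$, where $e_r$ is the $r$-th elementary symmetric polynomial and $f_G$ denotes the Lagrangian polynomial of $G$. Since $e_r(\vec{x}) \le \binom{t}{r}/t^r$ with equality only at the uniform weighting $\vec{u}$, the task reduces to proving
$$f_{\bar H}(\vec{x}) \;\ge\; \frac{\binom{t}{r}}{t^r} - \frac{\binom{t-1}{r}}{(t-1)^r} \;\sim\; \frac{1}{2(r-2)!\, t^2}.$$
I would split the analysis of this inequality into two regimes of the deviation $\vec\epsilon = \vec{x} - \vec{u}$: when $\|\vec\epsilon\|$ is small, one approximates $f_{\bar H}(\vec{x}) \approx |\bar H|/t^r$, so the excess of $\delta_r t^{r-2}$ edges in $|\bar H|$ over the critical value $\binom{t-2}{r-2}$ supplies the required slack; when $\|\vec\epsilon\|$ is large, a second-order Taylor expansion of $e_r$ at $\vec{u}$ produces an additional gap of order $\|\vec\epsilon\|^2$ coming from the Hessian of $e_r$.

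The main obstacle is balancing these two regimes quantitatively. The optimal $\vec{x}$ can deviate from $\vec{u}$ in a structured way dictated by $\bar H$ (vertices incident to many missing edges get suppressed), and the Lagrange conditions are the essential tool for pinning down this deviation. The exponent $r-2$ in the allowed gap emerges precisely from matching the linear contribution of the missing edges, of order $|\bar H|/t^r$, against the quadratic Hessian contribution from $e_r$ at $\vec{u}$, which is of order $\|\vec\epsilon\|^2$. Improving beyond this threshold, as the present paper does to $t^{r-7/3}$, will require extracting higher-order or more refined structural information about extremal configurations near the critical case.
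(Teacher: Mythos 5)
Your proposal is an outline rather than a proof, and two of its load-bearing steps have genuine gaps. First, the case $|S|\ge t+1$: your count is off and the tool you invoke does not exist off the shelf. For $|S|=t+1$ the number of missing edges inside the support is only about $\binom{t}{r-1}+\binom{t-2}{r-2}=\Theta(t^{r-1})$, which is a $O(1/t)$ fraction of $\binom{|S|}{r}=\Theta(t^{r})$, not a ``positive fraction''; and a general quantitative stability estimate of the form ``few missing edges in $K^r_{|S|}$ forces $\lambda$ strictly below $\binom{t-1}{r}/(t-1)^r$'' is false without structural information on where the missing edges sit relative to the optimal weighting (missing edges concentrated on low-weight vertices barely affect the Lagrangian). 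Handling supports larger than $t$ is in fact the hardest part of all known arguments: Tyomkyn needs left-compression, covering pairs, and his bounds $T=t+O(1)$, $x_1<1/(t-\alpha)$ before he can even start, and the present paper devotes its key Lemma~\ref{keylemma} (gluing $T-1$ and $T$ after an explicit edge exchange, Lemmas~\ref{G/ij}--\ref{T>t}) to eliminating $T>t$ entirely. Your one-sentence disposal of this case is not a proof.

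Second, in the main case $|S|=t$ you reduce to showing $f_{\bar H}(\vec x)\gtrsim \frac{1}{2(r-2)!\,t^2}$ and propose a two-regime argument around the uniform weighting, but the crux is exactly the point you defer: the optimizer suppresses precisely the coordinates that appear in $f_{\bar H}$, so $f_{\bar H}(\vec x)$ can fall well below $|\bar H|/t^r$, and since $|\bar H|/t^r\approx\frac{1}{(r-2)!\,t^2}$ the margin over the target is only a factor of about $2$ --- easily consumed unless one proves quantitative lower bounds on the relevant weights. That is where the actual exponent comes from, and it requires the structural machinery (left-compression, the tail set $S$ and the fact that every non-edge meets it in two elements, and bounds such as $x_1<1/(t-r+1)$, $x_{t-1}\ge\gamma x_1$ as in Lemmas~\ref{xkt}--\ref{gammat}), which your sketch acknowledges but does not supply. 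Note also that the cited statement is Tyomkyn's theorem; the paper does not reprove it directly but obtains a stronger bound by compression plus explicit edge-exchange arguments (swap the edges through the pair $\{t-1,t\}$ for non-edges avoiding that pair, then invoke Lemma~\ref{Hmr}(i)), which is a genuinely different and, as executed here, complete route; your symmetric-function/stability plan is closer in spirit to Nikiforov's approach (Theorem~\ref{smooth}) but would need the missing quantitative balancing to become a proof.
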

Here is our main theorem.
  \begin{thm}\label{mainthm}
 For $r\geq 4$,  there exists a constant $\delta_r>0$ such that
    for any $m$ satisfying ${{t-1}\choose r}\leq m\leq {t\choose r}-{t-2\choose r-2}-\delta_rt^{r-\frac{7}{3}}$ we have
    $$\lambda_r(m)=\frac{{t-1\choose r}}{(t-1)^r}.$$
 \end{thm}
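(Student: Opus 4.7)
The plan is to follow the standard extremal framework used by Frankl-Rödl, Talbot, and Tyomkyn, but to sharpen the quantitative estimate at the critical step. Suppose, for contradiction, that Theorem \ref{mainthm} fails for some $m$ in the stated range; choose a violating $r$-uniform hypergraph $H$ with $m$ edges and the minimum number of vertices, so $\lambda(H)>\binom{t-1}{r}/(t-1)^r$. Let $\vec{x}$ be an optimal weighting and $S=\{v:x_v>0\}$ its support. The Karush-Kuhn-Tucker conditions give $\partial\lambda/\partial x_v=r\lambda(H)$ for every $v\in S$, and the Frankl-Rödl covering argument forces every pair in $\binom{S}{2}$ to lie in an edge of $H$. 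The trivial bound $\lambda\le|E(H)|/|S|^r$ rules out $|S|\le t-1$. The upper bound $|S|\le t$ should then be derived by Tyomkyn's compression/merging argument: if two support vertices have low weight and small symmetric difference in their links, identifying them strictly increases $\lambda$ while keeping $m$ fixed, contradicting the choice of $H$.

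Once $|S|=t$, I would write $H|_S=K_t^{(r)}\setminus M$ with $|M|=\binom{t}{r}-m\ge\binom{t-2}{r-2}+\delta_r't^{r-7/3}$, parametrize $x_v=1/t+\epsilon_v$ with $\sum_v\epsilon_v=0$, and expand
\[
\lambda(H)=\frac{m}{t^r}+\frac{1}{t^{r-1}}\sum_v d_v^+\epsilon_v+\frac{1}{t^{r-2}}\sum_{u<v} c_{uv}^+\epsilon_u\epsilon_v+O(t^{-r+3}\|\vec\epsilon\|_\infty^3),
\]
where $d_v^+$ is the degree and $c_{uv}^+$ the codegree of $\{u,v\}$ in $H$. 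Writing these as $d_v^+=\binom{t-1}{r-1}-d_v(M)$ etc., the goal is to prove
\[
\lambda(H)\le\frac{\binom{t-1}{r}}{(t-1)^r}-\Omega\!\left(\frac{|M|}{t^r}\right)+O\!\left(\frac{\|\vec\epsilon\|_2^2}{t^{r-2}}\right),
\]
while the KKT equations, viewed as a linear system on the $\epsilon_v$ with matrix governed by $M$, force $\|\vec\epsilon\|_2^2\lesssim |M|^{\,2-\alpha}/t^{\beta}$ for some $\alpha,\beta$ chosen so that the two error terms balance at the exponent $r-7/3$ rather than at Tyomkyn's $r-2$.

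The main obstacle is controlling the quadratic form $\sum c_{uv}^+\epsilon_u\epsilon_v$ and extracting the sharper bound on $\|\vec\epsilon\|_2$ from the link-equality constraints. In Tyomkyn's argument only a first-order (linear) balance between the $|M|/t^r$ saving and a $1/t^2$ quadratic cost is used, giving slack $t^{r-2}$; to gain the extra factor of $t^{1/3}$ one must exploit the fact that the covering property restricts the degree distribution of $M$, so the matrix of the KKT linear system is far from singular in a quantifiable sense. Concretely, I expect the decisive lemma to say that if $|M|\ge\binom{t-2}{r-2}+Ct^{r-7/3}$ then every feasible perturbation of the uniform weighting costs at least $\Omega(t^{-r-4/3}\cdot|M|)$ more than it saves; proving this via a spectral or averaging argument on the "deficiency" hypergraph $M$ is the technical heart of the proof and where the exponent $7/3$ is ultimately determined.
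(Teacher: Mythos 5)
Your proposal has two genuine gaps. First, the step you treat as routine---showing the support has size exactly $t$---is in fact the paper's main new contribution (Lemma \ref{keylemma}), and your sketch of it does not work as stated: identifying two support vertices whose pair is uncovered does \emph{not} keep the edge count fixed, since edges $f\cup\{i\}$ and $f\cup\{j\}$ with a common $f$ merge into one, so ``identifying them strictly increases $\lambda$ while keeping $m$ fixed'' is unavailable. The paper has to work for this: assuming $T>t$, it first shows (Lemma \ref{T>t}) that $|E^c_{T\backslash (T-1)}|-|E_{T-1,T}|=\Omega(t^{r-2})$, then replaces the $b=|E_{T-1,T}|$ edges through $\{T-1,T\}$ by $b+\eta$ non-edges of the form $f\cup\{T\}$ with $f\cup\{T-1\}\in E$ (with $\eta=\Theta(t^{r-5/2})$), proves via the weight estimates of Lemmas \ref{xk}--\ref{gamma} that the weight does not decrease, and only then glues $T-1$ to $T$ (Lemma \ref{G/ij}); the merged edges bring the edge count back to at most $m$, giving the contradiction. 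Note also that Tyomkyn proved the $t^{r-9/4}$ bound only \emph{conditionally} on the support being $t$, so his argument cannot be cited for this step.

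Second, your route to the exponent $7/3$---expanding $x_v=1/t+\epsilon_v$, controlling the codegree quadratic form, and invoking a conjectured lemma that the KKT system forces $\|\vec\epsilon\|_2$ to be small---is precisely the part you leave unproven (``I expect the decisive lemma to say\dots''), and it is doubtful it can be carried out in this form: the optimal weighting is not uniformly close to $1/t$ (the smallest weights have no useful lower bound, so $\|\vec\epsilon\|_\infty=\Theta(1/t)$), and no spectral or averaging control of the deficiency hypergraph $M$ is actually supplied. The paper obtains $7/3$ by a different, purely combinatorial balance: with $T=t$, let $s$ be the left-compression parameter of Lemma \ref{gammat}; swapping the edges through $\{t-1,t\}$ for non-edges costs at most $\min\{g_1(s),g_2(s)\}\,x_{t-1}^{r-1}x_t$, where $g_1(s)=O(s^2t^{r-3})$ (every non-edge meets $S$ in two elements, Lemma \ref{betat}) and $g_2(s)=O(t^{r-2}/s)$ (via $x_{t-1}>\gamma x_1$), and these balance at $s\approx t^{1/3}$ to give $O(t^{r-7/3})$ (Lemma \ref{minimum}); adding $\eta$ further non-edges recoups the loss, and the uncovered pair $\{t-1,t\}$ then forces $\lambda(G)<\lambda([t-1]^{(r)})$, a contradiction. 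So the quantitative mechanism that actually produces $t^{r-7/3}$ is missing from your proposal.
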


Tyomkyn \cite{T2017} proved that the gap can be reduced to $O(t^{r-9/4})$ under an assumption that the hypergraphs have support on $t$ vertices.
We actually proved that the maximum hypergraphs have support on $t$ vertices
for sufficiently large $t$ (see Lemma \ref{keylemma}.)
Moreover, our gap $O(t^{r-7/3})$ improves $O(t^{r-9/4})$ on the exponent slightly.

Another related result is a smooth upper bound on $\lambda_r(m)$. The following result,
which was conjectured (and partially solved for $r=3,4,5$ and any $m$; and for the case $m\geq {4(r-1)(r-2)\choose r}$) by Nikiforov \cite{Nikiforov2018},
was completely proved by the second author.

\begin{thm}[{\cite{maxPspec}}] \label{smooth}
 For all $r\geq 2$ and all $m\geq 1$, if we write $m = {s\choose r}$
   for some real number $s\geq r-1$, then have $$\lambda_r(m)\leq m s^{-r}.$$
  The equality holds if and only if $s$ is an integer and the hypergraph achieving $\lambda_r(m)$ must be the complete  $r$-uniform hypergraph $K^r_s$  (possibly with some isolated vertices added.)
 \end{thm}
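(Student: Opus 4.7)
The plan is to prove the smooth bound $\lambda(H) \leq m s^{-r}$ for every $r$-uniform hypergraph $H$ with $m$ edges, where $s\geq r-1$ is the unique real satisfying $m=\binom{s}{r}:=s(s-1)\cdots(s-r+1)/r!$, by induction on $r$. The base case $r=2$ is a smooth consequence of Motzkin--Straus: writing $k=\omega(G)\leq\lfloor s\rfloor$ gives $\lambda_2(m)\leq (k-1)/(2k)\leq (s-1)/(2s)=ms^{-2}$.

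For the inductive step $r\geq 3$, let $H$ realize $\lambda_r(m)$ and let $\vec x$ be an optimal legal weighting with support $S$ of size $n$; after discarding isolated vertices take $V(H)=S$ and reorder so $x_1\geq\cdots\geq x_n>0$. First I would apply Frankl left-compressions: for each pair $i<j$ and each edge $e$ containing $j$ but not $i$, replace $e$ by $(e\setminus\{j\})\cup\{i\}$ whenever the latter is missing; an exchange argument using $x_i\geq x_j$ yields $\lambda(H)\leq\lambda(H^{ij})$, so one may assume $H$ is left-shifted. Next split $E(H) = E' \sqcup \{e : n\in e\}$ with $E'\subseteq \binom{[n-1]}{r}$, and write $L_n = \{f\in\binom{[n-1]}{r-1} : f\cup\{n\}\in E(H)\}$ for the link of vertex $n$. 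With $a=1-x_n=\sum_{i<n}x_i$ and $\tilde x_i = x_i/a$ for $i<n$, the identity
\[
\lambda(H) = a^r \sum_{e \in E'}\prod_{j\in e}\tilde x_j + x_n a^{r-1}\sum_{f\in L_n}\prod_{j\in f}\tilde x_j
\]
holds at the optimum. Applying the inductive hypothesis to each sub-sum (the first at level $r$ with $|E'|=\binom{s''}{r}$, the second at level $r-1$ with $|L_n|=\binom{s'}{r-1}$) gives $\lambda(H)\leq a^r|E'|(s'')^{-r} + x_n a^{r-1}|L_n|(s')^{-(r-1)}$. One then optimizes over $x_n\in[0,1]$ and combines with the Kruskal--Katona cascade identity tying $m$ to $|E'|$ and $|L_n|$ for a left-shifted $H$ to conclude $\lambda(H)\leq ms^{-r}$.

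The equality analysis is then a bookkeeping exercise: each inductive inequality and the Motzkin--Straus base step is tight iff the relevant subvector of $\vec x$ is uniform, the cascade of $m$ collapses to a single term iff $m=\binom{s}{r}$ for integer $s$, and compression is trivial iff $H$ is complete --- together forcing $H=K^r_s$ (with isolated vertices added) and $\vec x$ uniform on the $s$ support vertices. The main obstacle, I expect, is the algebraic verification in the final optimization: gluing the two inductive sub-bounds via the weight split $(x_n,a)$ must produce exactly $ms^{-r}$ rather than a slightly weaker bound, and this relies on sharply using the concavity/convexity of the falling factorial $m\mapsto ms^{-r}$ together with the precise combinatorial relationship between $m$, $|E'|$, $|L_n|$ forced by left-shifting. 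Verifying this identity cleanly --- especially at the cascade boundaries where the real $s$ crosses integer values --- is where I expect the main technical work to lie.
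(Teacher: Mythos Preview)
The paper does not contain a proof of this theorem. Theorem~\ref{smooth} is stated in the introduction as a related result, attributed via the citation~\cite{maxPspec} to a separate preprint of the second author; the present paper simply quotes it and moves on. There is therefore nothing here to compare your proposal against.

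Regarding the proposal on its own terms: the base case via Motzkin--Straus is fine, and splitting off the last vertex to write $\lambda(H)$ as a combination of an $r$-graph term on $[n-1]$ and an $(r-1)$-graph link term is a natural inductive setup. But you have correctly identified---and then left open---the only part that carries real content. After applying the inductive hypothesis to each piece you obtain
\[
\lambda(H)\;\le\; a^{r}\,|E'|\,(s'')^{-r} \;+\; x_n\,a^{r-1}\,|L_n|\,(s')^{-(r-1)},
\]
and you must now show that this quantity, maximized over $x_n\in[0,1]$ and over all pairs $(|E'|,|L_n|)$ consistent with $|E'|+|L_n|=m$ and the shadow constraints coming from left-compression, is at most $m s^{-r}$. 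This is not bookkeeping: the two parameters $s',s''$ move in opposite directions as edges are shifted between $E'$ and $L_n$, and nothing you have written explains why the resulting one-variable optimization lands exactly on $m s^{-r}$ rather than something larger. Invoking ``Kruskal--Katona cascade'' and ``concavity of the falling factorial'' names the ingredients one would hope to use but does not constitute the argument. As written, this is a plan whose decisive step is still conjectural.
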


The paper will be organized as follows:  the notation and previous lemmas will be given in Section 2. In Section 3, we prove a key lemma that the
maximum hypergraphs have support $t$ for $t$ sufficiently large. Finally, the proof of Theorem \ref{mainthm} will be given in Section 4.

\section{Notation and Preliminaries}
Let $\mathbb{N}$ be the set of all positive integers and $[t]$  the
set of first $t$ positive integers. For any integer $r\geq 2$ and a set $V$, we use $V^{(r)}$ (or ${V\choose r}$) to denote all $r$-subsets of $V$.
An $r$-unform hypergraph (or $r$-graph, for short) consists of a vertex set $V$ and an edge set
$E\subseteq V^{(r)}$.
Given an $r$-graph $G=(V,E)$ and a set $S\subseteq V$ with $|S|<r$, the $(r-|S|)$-uniform {\it link hypergraph} of $S$ is defined as $G_S=(V,E_S)$ with $E_S:=\{f\in V^{(r-|S|)}\colon f\cup S\in E\}$. We will denote the complement graph of $G_S$ by $G^c_S=(V,E^c_S)$ with $E^c_S:=\{f\in V^{(r-|S|)}\colon f\cup S\in V^{(r)}\less E\}$. Define $G_{i\less j}=(V, E_{i\less j})$, where $E_{i\less j}:=\{f\in E_i\less E_j\colon j\notin f\}$ and $G^c_{i\less j}=(V, E^c_{i\less j})$, where $E^c_{i\less j}:=\{f\colon f\cup\{i\}\in E^c ~\text{but}~ f\cup\{j\}\in E\}$, i.e., $E^c_{i\less j}=E^c_i\cap E_j$. Let $G-i$ be the $r$-graph obtained from $G$ by deleting vertex $i$ and those edges containing $i$. A hypergraph $G=(V,E)$ is said to {\it cover} a vertex pair $\{i,j\}$ if there exists an edge $e\in E$ with $\{i,j\}\subseteq e$. The $r$-graph $G$ is said to {\it cover pairs} if it covers every pair $\{i,j\}\subseteq V^{(2)}$.

From now on we assume that $G$ is an $r$-graph on the vertex set $[n]$.
Given a vector $\vec x=(x_1,\ldots,x_n)\in \mathcal{R}^n$,
we write $x_f=x_{i_1}x_{i_2}\cdots x_{i_r}$ if $f=\{i_1,i_2, \ldots, i_r\}$.
The {\it weight polynomial} of $G$ is given by
$$w(G,\vec x)=\sum_{e\in E(G)}x_e.$$
We call $\vec x=(x_1,\ldots,x_n)\in \mathcal{R}^n$  a {\it legal weighting} for $G$ if $x_i\geq0$ for any $i\in[n]$ and $\sum_{i=1}^n x_i=1$.
The set of all legal weightings forms a standard simplex in $\mathbb{R}^n$.
We call a legal weighting ${\vec x}_0$ {\it optimal} if
$w(G,\vec x)$ reaches the maximum at $\vec x={\vec x}_0$ in this simplex. The maximum value
of $w(G,\vec x)$, denoted by $\lambda(G)$,  is called the {\em Lagrangian} of $G$.

\begin{lem}[\cite{FF1989,T2017}]\label{Hmr}
Suppose that $G\subseteq [n]^{(r)}$ and $\overset{\rightarrow}{x}=(x_1,\cdots,x_n)$ is a legal weighting. For all $1\leq i<j\leq n$, we have
\begin{description}
  \item[(i)]  Suppose that $G$ does not cover the pair $\{i,j\}$. Then $\lambda(G)\leq\max\{\lambda(G-i),\lambda(G-j)\}$. In particular, $\lambda(G)\leq\lambda([n-1]^{(r)})$.
  \item[(ii)] Suppose that $m,t\in \mathbb{N}$ satisfy ${{t-1}\choose r}\leq m\leq {t\choose r}-{{t-2}\choose{r-2}}$. Then
  $$\lambda(C_{r,m})=\lambda([t-1]^{(r)})=\frac{1}{(t-1)^r}{{t-1}\choose r}.$$
  \item[(iii)] $w(G_i,\overset{\rightarrow}{x})\leq (1-x_i)^{r-1} \lambda(G_i)$ for any $i\in [n]$.
\end{description}
\end{lem}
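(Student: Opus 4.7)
The plan is to handle the three parts in sequence, as each is a short standard argument that feeds into the next.

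For (i), I would exploit that no monomial of $w(G,\vec x)$ contains both $x_i$ and $x_j$. Fixing all coordinates except $x_i,x_j$ and fixing the sum $s=x_i+x_j$, the polynomial is linear in $x_i$, so we may push all the mass from one of $\{i,j\}$ to the other without decreasing $w(G,\vec x)$. After this move the zeroed vertex is isolated, and the evaluated weight polynomial coincides with $w(G-j,\vec x')$ or $w(G-i,\vec x')$ for the restricted weighting $\vec x'$, which yields $\lambda(G)\leq\max\{\lambda(G-i),\lambda(G-j)\}$. The ``in particular'' clause then follows because $G-i$ and $G-j$ are each (isomorphic to) subgraphs of $[n-1]^{(r)}$ and the Lagrangian is monotone in the edge set.

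For (ii), I would first describe the structure of $C_{r,m}$ in the principal domain. The first $\binom{t-1}{r}$ colex edges are exactly $[t-1]^{(r)}$. Every subsequent edge contains the vertex $t$, and among the edges $\{t\}\cup S$ with $S\in[t-1]^{(r-1)}$, the first $\binom{t-2}{r-1}=\binom{t-1}{r-1}-\binom{t-2}{r-2}$ in colex are precisely those with $S\subseteq[t-2]$. Since $\binom{t-1}{r}+\binom{t-2}{r-1}=\binom{t}{r}-\binom{t-2}{r-2}$, throughout the principal domain no edge of $C_{r,m}$ contains both $t-1$ and $t$, so the pair $\{t-1,t\}$ is uncovered. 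Applying (i) with $(i,j)=(t-1,t)$ together with monotonicity gives $\lambda(C_{r,m})\leq\lambda([t-1]^{(r)})$, while $[t-1]^{(r)}\subseteq C_{r,m}$ gives the reverse inequality. The value $\binom{t-1}{r}/(t-1)^r$ is then attained by the symmetric weighting $x_1=\cdots=x_{t-1}=1/(t-1)$, whose optimality on $[t-1]^{(r)}$ follows from symmetry and concavity (or Motzkin--Straus-type arguments).

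For (iii), I would renormalize the weights off of $i$. Assuming $x_i<1$ (otherwise $w(G_i,\vec x)=0$ trivially), set $y_j=x_j/(1-x_i)$ for $j\neq i$, which is a legal weighting on $V\setminus\{i\}$. Each edge $f$ of $G_i$ has size $r-1$ and avoids $i$, so $x_f=(1-x_i)^{r-1}y_f$, and therefore $w(G_i,\vec x)=(1-x_i)^{r-1}w(G_i,\vec y)\leq(1-x_i)^{r-1}\lambda(G_i)$. No part of this lemma presents a genuine obstacle; the only place that demands care is the combinatorial identity inside (ii), namely verifying that the upper endpoint $\binom{t}{r}-\binom{t-2}{r-2}$ of the principal domain is exactly where the pair $\{t-1,t\}$ first becomes covered in $C_{r,m}$.
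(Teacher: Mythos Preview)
The paper does not supply its own proof of this lemma; it is quoted wholesale from the cited references \cite{FF1989,T2017}. Your three arguments are correct and are exactly the standard ones appearing in those sources: the linearity-in-$x_i$ (with $x_i+x_j$ fixed) endpoint argument for (i), the colex description of $C_{r,m}$ in the principal domain combined with (i) and monotonicity for (ii), and the renormalization $y_j=x_j/(1-x_i)$ for (iii). The one place you flag as delicate---the identity $\binom{t-1}{r}+\binom{t-2}{r-1}=\binom{t}{r}-\binom{t-2}{r-2}$ and the fact that the $(r-1)$-sets $S\subseteq[t-2]$ precede those containing $t-1$ in colex---is handled correctly.
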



Let $E\subset \mathbb{N}^{(r)}$, $e\in E$ and $i,j\in \mathbb{N}$ with $i<j$. We define
\begin{equation*}
L_{ij}(e)=\left\{\begin{array}{cc}
                             (e\less\{j\})\cup \{i\}, & \text{if}~i\notin e ~\text{and}~ j\in e;\\
                             e, & \text{otherwise},
                           \end{array}\right.
\end{equation*}
and $$\mathcal{C}_{ij}(E)=\{L_{ij}(e)\colon e\in E\}\cup \{e\colon e,L_{ij}(e)\in E\}.$$
We say that $E$ is {\it left-compressed} if $\mathcal{C}_{ij}(E)=E$ for every $1\leq i<j$.

From now on, we always assume ${{t-1}\choose r}\leq m< {t\choose r}$ for some integer $t$. Let $G$ be a graph with $e(G)=m$ which satisfies $\lambda(G)=\lambda_r(m)$ and let $\overset{\rightarrow}{x}$ be an (optimal) legal weighting attaining the Lagrangian of $G$. Without loss of generality, we can assume $x_i\geq x_j$ for all $i<j$ and $\overset{\rightarrow}{x}$ has the minimum possible number of non-zero entries, and let $T$ be this number.

Suppose that $G$ achieves a strictly larger Lagrangian than $C_{r,m}$. Then we have
$$\lambda(G)>\frac{1}{(t-1)^r}{{t-1}\choose r},$$
 which in turn implies $T\geq t$, otherwise $\lambda(G)\leq \lambda([t-1]^{(r)})$.

 \begin{lem}[\cite{FF1989,FR1984,T2002}]\label{four}
 Let $G$, $T$, and $\overset{\rightarrow}{x}$ be as defined above. Then
\begin{description}
\item[(i)] $G$ can be assumed to be left-compressed and to cover pairs.
\item[(ii)] For all $1\leq i\leq T$ we have  $$w(G_i,\overset{\rightarrow}{x})=r\lambda(G_i).$$
\item[(iii)] For all $1\leq i<j\leq T$ we have $$(x_i-x_j)w(G_{i,j},\overset{\rightarrow}{x})=w(G_{i\less j},\overset{\rightarrow}{x}).$$
\end{description}
\end{lem}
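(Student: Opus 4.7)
My plan is to verify the three parts in turn, using optimality of $\vec x$ and a left-compression argument.

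For the `left-compressed' half of (i), I would iterate the following swap: if $e\in E(G)$ contains $j$ but not $i$ for some $i<j$ and $L_{ij}(e)\notin E$, replace $e$ by $L_{ij}(e)$. Because $x_i\geq x_j$, this substitution gives $x_{L_{ij}(e)}\geq x_e$, so the new graph $G'$ satisfies $w(G',\vec x)\geq w(G,\vec x)=\lambda_r(m)$, has the same edge count $m$, and hence $\lambda(G')=\lambda_r(m)$. The process terminates in finitely many steps.

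For (ii), I would invoke the Lagrange multiplier / KKT conditions. Since $\vec x$ maximizes $w(G,\cdot)$ on the simplex and lies in the relative interior of the face $\{x_{T+1}=\cdots=x_n=0\}$, KKT supplies a common multiplier $\mu$ with $w(G_i,\vec x)=\partial w(G,\vec x)/\partial x_i=\mu$ for every $i\leq T$. Since $w(G,\cdot)$ is homogeneous of degree $r$, Euler's identity then identifies $\mu=\sum_{i=1}^T x_i\, w(G_i,\vec x)=r\,w(G,\vec x)=r\lambda(G)$, which is the claimed common value.

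With (ii) in hand, the `cover pairs' half of (i) is immediate. If $\{i,j\}\subseteq[T]$ were uncovered, the modified weighting $x'_i=x_i+x_j,\, x'_j=0$ (and $x'_k=x_k$ otherwise) is legal, and a direct computation, using that no edge contains both $i$ and $j$, gives $w(G,\vec x')-w(G,\vec x)=x_j(w(G_i,\vec x)-w(G_j,\vec x))=0$ by (ii). The resulting weighting has one fewer non-zero entry, contradicting the minimality of $T$.

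For (iii), I would expand $w(G_i,\vec x)-w(G_j,\vec x)$ by partitioning each link according to whether an $(r-1)$-set $f$ contains the other index. The sets $f\in E_i$ with $j\in f$ contribute $x_j\,w(G_{i,j},\vec x)$, while symmetrically $f\in E_j$ with $i\in f$ contribute $x_i\,w(G_{i,j},\vec x)$. Among the $(r-1)$-sets $f$ with $i,j\notin f$, those for which both $f\cup\{i\}$ and $f\cup\{j\}$ lie in $E$ cancel between the two sums, while the asymmetric remainders are precisely $E_{i\less j}$ and $E_{j\less i}$. Combined with (ii), which equates the two link weights, we obtain
\[
(x_i-x_j)\, w(G_{i,j},\vec x)=w(G_{i\less j},\vec x)-w(G_{j\less i},\vec x).
\]
Finally, left-compressedness together with $i<j$ forces $E_{j\less i}=\es$, since any edge containing $j$ but not $i$ would already have been $L_{ij}$-shifted. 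Hence the last term vanishes, giving (iii). The main subtlety is the bookkeeping in the link decomposition; no conceptual obstacle remains once (i) and (ii) are in place.
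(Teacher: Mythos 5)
Your argument is correct and is essentially the standard proof from the sources the paper cites for this lemma (Frankl--R\"odl, Talbot), which the paper itself does not reproduce: compression of a maximizer stays a maximizer of the same size, the first-order (KKT) condition plus Euler's identity for the degree-$r$ homogeneous form gives the common value of the link weights, shifting all of $x_j$ onto $x_i$ across an uncovered pair contradicts the minimality of $T$, and the link decomposition together with $E_{j\setminus i}=\emptyset$ for a left-compressed $G$ yields (iii). One remark: part (ii) as printed, $w(G_i,\vec x)=r\lambda(G_i)$, is evidently a misprint for $w(G_i,\vec x)=r\lambda(G)$ (the printed form would contradict Lemma~\ref{Hmr}(iii)), and your KKT/Euler computation establishes exactly the intended statement.
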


\begin{lem}[\cite{T2017}] \label{x1} Let $G$, $T$, and $\overset{\rightarrow}{x}$ be as defined above. Then for sufficiently large $t$,
\begin{description}
\item[(i)] $T=t+C$ for some constant $C=C(r)$.
\item[(ii)] $x_1<\frac{1}{t-\alpha}$ for some constant $\alpha=\alpha(r)>0$.
\item[(iii)] If $T=t$, then $x_1<\frac{1}{t-r+1}$.
\end{description}
\end{lem}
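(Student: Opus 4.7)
I tackle the three parts in the order (iii), (ii), (i), since (iii) provides the template computation and (ii) is a perturbed version of it, while (i) requires independently bounding $T$.

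\emph{Part (iii)}, the cleanest case: when $T=t$, the link $G_1$ is an $(r-1)$-uniform hypergraph supported on $[t-1]$, so Lemma \ref{Hmr} gives $\lambda(G_1)\leq\binom{t-1}{r-1}/(t-1)^{r-1}$. Combining $w(G_1,\vec x)=r\lambda(G)$ (Lemma \ref{four}(ii)), $w(G_1,\vec x)\leq(1-x_1)^{r-1}\lambda(G_1)$ (Lemma \ref{Hmr}(iii)), and the standing hypothesis $\lambda(G)>\binom{t-1}{r}/(t-1)^r$, I obtain $(1-x_1)^{r-1}>(t-r)/(t-1)$. The conclusion $x_1<1/(t-r+1)$ then reduces to verifying $(t-r+1)^{r-1}>(t-1)(t-r)^{r-2}$; substituting $u=t-r$ and expanding, the difference equals $\sum_{k=0}^{r-3}\binom{r-1}{k}u^k$, strictly positive for $u\geq 1$.

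For \emph{part (ii)}, given $T\leq t+C$ from (i), the bound becomes $\lambda(G_1)\leq\binom{T-1}{r-1}/(T-1)^{r-1}$. A direct check shows that the ratio $\binom{T-1}{r-1}(t-1)^{r-1}/(\binom{t-1}{r-1}(T-1)^{r-1})$ equals $1+O(1/t^2)$ for bounded $C$ (the first-order term in $C/t$ cancels between numerator and denominator), so $(1-x_1)^{r-1}>(1-O(1/t^2))(t-r)/(t-1)$, and the same $(r-1)$-th root manipulation as in (iii) yields $x_1<1/(t-\alpha)$ for some $\alpha=\alpha(r)>0$.

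\emph{Part (i)} is the most involved. Writing $d_i=|E(G_i)|$ and $\binom{s_i}{r-1}=d_i$, Theorem \ref{smooth} applied to each link $G_i$ gives $\lambda(G_i)\leq d_i/s_i^{r-1}$. Combined with $\lambda(G_i)\geq r\lambda(G)>r\binom{t-1}{r}/(t-1)^r$, a Taylor expansion of $\binom{s}{r-1}/s^{r-1}$ near its limit $1/(r-1)!$ produces a lower bound on $s_i$ and hence $d_i\gtrsim c(r)\binom{t-1}{r-1}$. Summing against $\sum_id_i=rm\leq t\binom{t-1}{r-1}$ yields an initial bound $T\leq C'(r)t$. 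Refining this to the sharp $T\leq t+C(r)$ is the main obstacle: the natural bootstrap, using the crude $T$-bound in the (ii)-style inequality to extract $x_1\leq c/t$ and then feeding it back via the pointwise bound $d_T\geq r\lambda(G)/x_1^{r-1}$, reduces to a fixed-point iteration whose linearization at the target value $T=t$ has derivative $(r-1)(r-2)/2\geq 1$ for $r\geq 3$, and so does not contract. Securing the refinement therefore requires a more delicate structural input---either exploiting left-compressedness via Lemma \ref{four}(iii) to argue that the weights $x_1,\ldots,x_r$ are approximately equal (forcing $x_1\approx 1/t$ without the loss inherent in the bootstrap), or a higher-order Taylor analysis of Theorem \ref{smooth} that recovers the missing multiplicative constant directly.
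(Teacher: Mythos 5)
Your parts (iii) and (ii) are essentially correct. For (iii), the chain $r\lambda(G)=w(G_1,\vec x)\le (1-x_1)^{r-1}\lambda(G_1)\le (1-x_1)^{r-1}\binom{t-1}{r-1}/(t-1)^{r-1}$ (you rightly use $w(G_1,\vec x)=r\lambda(G)$; the paper's statement of Lemma \ref{four}(ii) with $\lambda(G_i)$ is a typo), combined with $\lambda(G)>\binom{t-1}{r}/(t-1)^r$, gives $(1-x_1)^{r-1}>\frac{t-r}{t-1}$, and your identity $(u+1)^{r-1}-(u+r-1)u^{r-2}=\sum_{k=0}^{r-3}\binom{r-1}{k}u^k>0$ with $u=t-r$ closes it. The perturbation computation in (ii) is also fine, but it is explicitly conditional on (i). Note that this paper does not prove Lemma \ref{x1} at all — it is imported from Tyomkyn \cite{T2017} — so your attempt is measured against Tyomkyn's argument rather than anything in the present text.

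The genuine gap is part (i), and you acknowledge it yourself. Your degree/Lagrangian count via Theorem \ref{smooth} (each link satisfies $\lambda(G_i)\ge r\lambda(G)$, forcing $|E_i|\ge c(r)t^{r-1}$, and $\sum_i|E_i|=rm$) only yields $T\le C'(r)t$, and, as you correctly diagnose, feeding the resulting bound $x_1\le (r/2+o(1))/t$ back through $r\lambda(G)=w(G_T,\vec x)\le |E_T|x_1^{r-1}$ loses a constant factor raised to the $(r-1)$st power, so the iteration stalls at $T=O(t)$ and never reaches $T\le t+C(r)$. But the passage from $T=O(t)$ to $T=t+O(1)$ \emph{is} the lemma; the two "structural inputs" you gesture at (near-equality of $x_1,\ldots,x_r$ via left-compressedness, or a sharper use of Theorem \ref{smooth}) are left entirely uncarried-out, and it is not evident that either works as stated — indeed Tyomkyn's proof of this bound is a genuinely more delicate argument, not a routine refinement of the bootstrap. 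So the proposal proves (iii) and the implication (i)$\Rightarrow$(ii), but the core claim (i) remains unproved.
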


Here is our key lemma.

\begin{lem}\label{keylemma}
  Let $G$, $T$, and $\overset{\rightarrow}{x}$ be as defined above. There is a constant $t_0:=t_0(r)$ such that
  if $t\geq t_0$ then $T=t$.
\end{lem}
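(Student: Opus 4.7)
Suppose for contradiction that $T \geq t+1$. Writing $f(n) := \binom{n}{r}/n^r$, the goal is to derive $\lambda(G) \leq f(t-1)$, contradicting the hypothesis $\lambda(G) > f(t-1)$ that forced $T \geq t$ in the first place.

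The first key step is a vertex-removal identity. From Lemma \ref{four}(ii), $w(G_T, \vec x) = r\lambda(G)$, so splitting edges of $G$ by whether they contain vertex $T$ yields $w(G-T, \vec y) = (1 - rx_T)\lambda(G)$, where $\vec y = (x_1, \ldots, x_{T-1})$. Rescaling $\vec y/(1-x_T)$ to a legal weighting on $[T-1]$ gives
\[ \lambda(G) \;\leq\; \frac{(1 - x_T)^r}{1 - r x_T}\,\lambda(G - T). \]

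The second step is to lower-bound $d_T$: Lemma \ref{Hmr}(iii) together with $x_1 < 1/(t-\alpha)$ from Lemma \ref{x1}(ii) yields $r\lambda(G) = w(G_T, \vec x) \leq d_T\, x_1^{r-1} < d_T/(t-\alpha)^{r-1}$, whence $d_T > r f(t-1)(t-\alpha)^{r-1}$. A careful expansion shows this is $\binom{t}{r-1} - O_r(t^{r-2})$; combined with the hypothesis $m \leq \binom{t}{r} - \binom{t-2}{r-2} - \delta_r' t^{r-7/3}$ of Theorem \ref{mainthm}, this forces $m - d_T$ into the principal domain for $t-1$ with room to spare for Tyomkyn's gap, namely
\[ \binom{t-2}{r} \;\leq\; m - d_T \;\leq\; \binom{t-1}{r} - \binom{t-3}{r-2} - \delta_r (t-1)^{r-2}. \]
Applying Theorem \ref{3/4} at level $t-1$ then yields $\lambda(G - T) \leq f(t-2)$.

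Combining the two estimates gives $\lambda(G) \leq f(t-2)\cdot\frac{(1-x_T)^r}{1-rx_T}$. Since $x_T \leq 1/T \leq 1/(t+1)$ (being the minimum of $T \geq t+1$ positive coordinates summing to $1$), expanding the ratio as $1 + \binom{r}{2}x_T^2 + \tfrac{r(r-1)(r+1)}{3}x_T^3 + O(x_T^4)$ and comparing with the closed form
\[ \frac{f(t-1)}{f(t-2)} = \frac{(t-2)^r}{(t-1)^{r-1}(t-r-1)} = 1 + \frac{r(r-1)}{2t^2} + \frac{r(r-1)(r+4)}{3t^3} + O(t^{-4}), \]
one finds that the leading $1/t^2$-terms coincide, but at order $1/t^3$ the right-hand side exceeds the ratio by $2r(r-1)/t^3 + O(t^{-4}) > 0$, giving $\lambda(G) < f(t-1)$ for $t$ sufficiently large and completing the contradiction.

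\textbf{Main obstacle.} The delicate cancellation of the leading $1/t^2$-terms in the final comparison is the crux: one must control the $1/t^3$-level asymptotics precisely and exploit the sharper inequality $x_T \le 1/(t+1)$ rather than the weaker $x_T \le 1/(t-\alpha)$ from Lemma \ref{x1}(ii). A secondary technical challenge is making the lower bound on $d_T$ in the second step sharp enough to place $m - d_T$ inside Tyomkyn's range at level $t-1$; this requires careful tracking of $O(t^{r-2})$ error terms, and it is precisely here that the specific gap $\delta_r' t^{r-7/3}$ in the hypothesis of Theorem \ref{mainthm} provides the critical margin needed to close the inequality.
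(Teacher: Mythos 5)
Your step 1 (the identity $w(G_T,\vec x)=r\lambda(G)$ giving $\lambda(G)\leq \frac{(1-x_T)^r}{1-rx_T}\lambda(G-T)$) and your final asymptotic comparison are fine in isolation, but step 2 contains a fatal quantitative error, and it is exactly the step the whole plan rests on. To invoke Theorem \ref{3/4} at level $t-1$ you need
$m-d_T\leq \binom{t-1}{r}-\binom{t-3}{r-2}-\delta_r(t-1)^{r-2}$, i.e.
$d_T\geq m-\binom{t-1}{r}+\binom{t-3}{r-2}+\delta_r(t-1)^{r-2}$. In the regime that matters, $m$ close to $\binom{t}{r}-\binom{t-2}{r-2}$, this amounts to $d_T\geq \binom{t-1}{r-1}-\binom{t-3}{r-3}+\delta_r(t-1)^{r-2}-O(t^{r-7/3})$, i.e. $d_T$ must exceed $\binom{t-1}{r-1}$ by $\Theta(t^{r-2})$. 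But your lower bound $d_T> r\,\frac{\binom{t-1}{r}}{(t-1)^r}(t-\alpha)^{r-1}=\binom{t-1}{r-1}\bigl(1-\Theta(1/t)\bigr)$ falls \emph{below} $\binom{t-1}{r-1}$ by $\Theta(t^{r-2})$; worse, since $G$ is left-compressed and $T\geq t+1$, one actually has $d_T\leq rm/T<\binom{t-1}{r-1}$ (this is precisely the content of Lemma \ref{T>t}), so $m-d_T>\binom{t-1}{r}-\binom{t-2}{r-2}-O(t^{r-7/3})$, which strictly exceeds the Tyomkyn threshold at level $t-1$ for large $t$. The displayed inclusion is therefore not just unproven but false near the critical case, and Theorem \ref{3/4} cannot be applied to $G-T$.

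Nor can the argument be patched by accepting a weaker bound on $\lambda(G-T)$: your own step 3 shows the available slack is only of relative order $t^{-3}$ (the $1/t^2$ terms cancel), i.e. of absolute order $t^{-3}$ in the Lagrangian, which corresponds to only $\Theta(t^{r-3})$ edges; the overshoot of $m-d_T$ beyond the level-$(t-1)$ critical value is $\Theta(t^{r-2})$ edges, worth $\Theta(t^{-2})$ in Lagrangian, so removing one vertex and inducting on $t$ loses far more than the margin allows. (There is also a scope issue: you use the upper bound on $m$ from Theorem \ref{mainthm}, while the lemma is stated under the standing assumption $\binom{t-1}{r}\leq m<\binom{t}{r}$ only, but this is minor.) The paper's proof avoids vertex removal entirely: it uses Lemma \ref{T>t} to find $\eta=\Theta(t^{r-5/2})$ more non-edges of the form $f\cup\{T\}$ with $f\cup\{T-1\}\in E$ than there are edges through the pair $\{T-1,T\}$, swaps the edges through $\{T-1,T\}$ for such non-edges without decreasing the weight (the two cases controlled by Lemmas \ref{beta} and \ref{gamma}), and then glues $T-1$ to $T$ via Lemma \ref{G/ij}; the gluing merges each added edge $f\cup\{T\}$ with $f\cup\{T-1\}$, producing a hypergraph with at most $m$ edges and strictly larger Lagrangian, contradicting the maximality of $G$. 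You would need an idea of this local, edge-exchange type rather than the global induction on $t$.
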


\section{Proof of Lemma \ref{keylemma}}
We need several lemmas before we prove Lemma \ref{keylemma}. 

Suppose that $G$ does not cover the pair $\{i,j\}$. Let $G_{/{ij}}$ be an $r$-uniform hypergraph
obtained from $G$ by gluing the vertices $i$ and $j$ as follows:
\begin{enumerate}
\item Let $v$ be a new fat vertex (by gluing $i$ and $j$.) Then
$G_{/ij}$ has the vertex set $(V(G)\setminus \{i,j\}) \cup \{v\}$.
\item The edge set of $G_{/ij}$ consists of all edges in $G$ not containing $i$ or $j$, plus
the edges of form $\{f\cup \{v\}\colon  f\in E_i \cup E_j\}$.
\end{enumerate}

The following lemma is similar to Lemma \ref{Hmr} (i), but has the advantage of being deterministic.
\begin{lem}\label{G/ij}
 Suppose that $G$ does not cover the pair $\{i,j\}$. Then $\lambda(G)\leq \lambda(G_{/ij})$.
\end{lem}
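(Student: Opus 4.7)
The plan is to construct, from any optimal legal weighting of $G$, a legal weighting on $V(G_{/ij})$ that witnesses $\lambda(G_{/ij})\geq\lambda(G)$ directly. Let $\vec x$ be an optimal legal weighting for $G$, so that $w(G,\vec x)=\lambda(G)$. On $V(G_{/ij})=(V(G)\less\{i,j\})\cup\{v\}$, I would define $\vec y$ by $y_v:=x_i+x_j$ and $y_k:=x_k$ for $k\neq v$. This is legal because $\sum_k y_k=\sum_k x_k=1$ and all entries remain nonnegative, so it suffices to verify $w(G_{/ij},\vec y)\geq w(G,\vec x)$.

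For the comparison, the key observation is that since $G$ does not cover $\{i,j\}$, any $f\in E_i$ satisfies $j\notin f$ (otherwise $f\cup\{i\}$ would contain both $i$ and $j$), and symmetrically for $E_j$. Hence $E_i\cup E_j\subseteq(V(G)\less\{i,j\})^{(r-1)}$, which means $\prod_{k\in f}y_k=\prod_{k\in f}x_k$ for every $f$ in these sets. Edges of $G$ avoiding $\{i,j\}$ contribute identically on both sides. Writing $S_A:=\sum_{f\in A}\prod_{k\in f}x_k$, the remaining contributions are $x_iS_{E_i}+x_jS_{E_j}$ to $w(G,\vec x)$ and $(x_i+x_j)S_{E_i\cup E_j}$ to $w(G_{/ij},\vec y)$. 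Expanding $S_{E_i\cup E_j}=S_{E_i}+S_{E_j}-S_{E_i\cap E_j}$ and simplifying yields
$$w(G_{/ij},\vec y)-w(G,\vec x)=x_jS_{E_i\less E_j}+x_iS_{E_j\less E_i}\geq 0,$$
which is the desired inequality.

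The only subtle point, and the step I would handle most carefully, is that each pair of distinct edges $f\cup\{i\},f\cup\{j\}$ of $G$ with $f\in E_i\cap E_j$ collapses to a single edge $f\cup\{v\}$ in $G_{/ij}$; the choice $y_v=x_i+x_j$ is exactly what makes the collapsed contributions balance, so no edge weight is lost. The strictly beneficial terms then come from those $f\in E_i\triangle E_j$, where the fat vertex inherits weight from both original vertices while only one of $f\cup\{i\},f\cup\{j\}$ was an edge of $G$. The argument is ``deterministic'' in the sense of the remark preceding the lemma because it produces a single bound $\lambda(G_{/ij})\geq\lambda(G)$ rather than the maximum of two bounds as in Lemma~\ref{Hmr}(i), and I foresee no real obstacle beyond the inclusion-exclusion bookkeeping above.
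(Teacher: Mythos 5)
Your proposal is correct and follows essentially the same route as the paper: transfer the optimal weighting of $G$ to $G_{/ij}$ via $y_v=x_i+x_j$, observe that no $f\in E_i\cup E_j$ meets $\{i,j\}$, and compute that the weight difference equals $x_j S_{E_i\less E_j}+x_i S_{E_j\less E_i}\geq 0$, which is exactly the paper's calculation. The inclusion--exclusion bookkeeping and the remark about collapsed edge pairs are fine, so nothing is missing.
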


\begin{proof}
  Let $\vec x$ be an optimal legal weighting of $G$. Define a legal weighting $\vec y$ of $G_{/ij}$ by
$y_v=x_i+x_j$ and $y_k=x_k$ if $k\neq v$.
Then we have
  \begin{align*}
    w(G_{/ij},\vec y)- w(G, \vec x) &= \sum_{f\in E_i\cup E_j} y_fy_v - \sum_{f\in E_i}x_fx_i
- \sum_{f\in E_j}x_fx_j \\
&= \sum_{f\in E_i\cup E_j} x_f (x_i+x_j) - \sum_{f\in E_i}x_fx_i
- \sum_{f\in E_j}x_fx_j \\
&=  \sum_{f\in E_i\setminus E_j} x_fx_j + \sum_{f\in E_j\setminus E_i} x_fx_i\\
&\geq 0.
  \end{align*}
Then we have
$$\lambda(G)=w(G, \vec x)\leq  w(G_{/ij},\vec y)\leq \lambda(G_{/ij}).$$
\end{proof}

\begin{lem} \label{xk}
  For any $k\in[T-1]$, we have
  \begin{equation}\label{eq:xk}
   x_{T-k}>\frac{k-C_0}{k+1}x_1,
  \end{equation}
  where $C_0=C+\alpha-1$.
\end{lem}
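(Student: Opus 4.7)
The plan is to use two inputs supplied by Lemma \ref{x1}: namely $T = t + C$ (from part (i)) and the strict bound $x_1 < 1/(t-\alpha)$ (from part (ii)). Combined with the normalization $\sum_{i=1}^{T} x_i = 1$ and the fact that we have ordered the coordinates so that $x_1 \geq x_2 \geq \cdots \geq x_T$, this should yield the claim by a one-step averaging argument; no structural information about $G$ is needed.

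Concretely, I would split the sum into the top $T-k-1$ coordinates and the bottom $k+1$ coordinates. Monotonicity of the $x_i$'s gives $x_i \leq x_1$ for $i \leq T-k-1$ and $x_i \leq x_{T-k}$ for $T-k \leq i \leq T$, so
\[
1 \;=\; \sum_{i=1}^{T} x_i \;\leq\; (T-k-1)\,x_1 + (k+1)\,x_{T-k}.
\]
Rearranging gives the lower bound $(k+1)\,x_{T-k} \geq 1 - (T-k-1)\,x_1$. Substituting $T = t+C$, the right-hand side becomes $1 - (t+C-k-1)\,x_1$.

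Now I would invoke $x_1 < 1/(t-\alpha)$ in the form $1 > (t-\alpha)\,x_1$, turning the previous inequality into the strict bound
\[
(k+1)\,x_{T-k} \;>\; (t-\alpha)\,x_1 - (t+C-k-1)\,x_1 \;=\; (k+1 - C - \alpha)\,x_1 \;=\; (k - C_0)\,x_1,
\]
since $C_0 = C + \alpha - 1$. Dividing by $k+1$ gives exactly \eqref{eq:xk}.

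There is no real obstacle: the proof is essentially a two-line averaging computation, and the role of the constant $C_0$ is simply to package the arithmetic $k + 1 - C - \alpha$ cleanly. The only thing worth being careful about is preserving strict inequality, which comes from the strict bound on $x_1$ in Lemma \ref{x1}(ii); note also that when $k \leq C_0$ the statement is trivial because the right-hand side of \eqref{eq:xk} is non-positive while $x_{T-k} > 0$ by the minimality of $T$.
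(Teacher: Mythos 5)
Your proof is correct and follows essentially the same averaging argument as the paper: bound the sum by $(T-k-1)x_1+(k+1)x_{T-k}$, then apply Lemma \ref{x1}(i) and (ii). The only cosmetic difference is that you substitute $1>(t-\alpha)x_1$ directly instead of passing through $1/(t-\alpha)$, which incidentally handles the case $k\leq C_0$ without a separate remark.
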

\begin{proof}
Observe that
\begin{align*}
1&=x_1+\cdots+x_{T-k-1}+x_{T-k}+\cdots+x_T\\
&< (T-k-1)x_1+(k+1)x_{T-k}\\
&< \frac{T-k-1}{t-\alpha}+(k+1)x_{T-k}.~~~~~~(by~Lemma~ \ref{x1}~ (ii))
\end{align*}
Solving $x_{T-k}$ and applying Lemma \ref{x1} (i) and (ii), we have
$$x_{T-k}>\frac{t-\alpha-T+k+1}{k+1}\cdot\frac{1}{t-\alpha}=
\frac{k-C_0}{k+1}\cdot\frac{1}{t-\alpha}>\frac{k-C_0}{k+1}x_1.$$
\end{proof}

\begin{lem} \label{beta}
There exists a constant $\beta$ such that for any subset $S\subseteq [T]^{(r-2)}$, we have
  \begin{equation}
    \label{eq:sumxk}
    \sum_{f\in S}(x_1^{r-2}-x_f)<\beta x_1.
  \end{equation}
\end{lem}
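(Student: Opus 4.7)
The plan is to bound each individual summand $x_1^{r-2}-x_f$ linearly in the ``gap coordinates'' $x_1 - x_{i_k}$, and then to observe that the total gap $\sum_{i=1}^T(x_1-x_i) = Tx_1-1$ is small by Lemma~\ref{x1}. The key algebraic step is the telescoping identity
\[
x_1^{r-2} - x_{i_1}\cdots x_{i_{r-2}} \;=\; \sum_{k=1}^{r-2} x_1^{r-2-k}\, x_{i_1}\cdots x_{i_{k-1}}\,(x_1 - x_{i_k}),
\]
which, together with the trivial estimate $x_{i_j}\leq x_1$ for all $j$, yields
\[
x_1^{r-2}-x_f \;\leq\; x_1^{r-3}\sum_{k=1}^{r-2}(x_1-x_{i_k})
\]
for every $(r-2)$-set $f=\{i_1,\ldots,i_{r-2}\}\in[T]^{(r-2)}$.

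Next I would sum this pointwise inequality over $f\in S$ and swap the order of summation. Since each vertex $i\in [T]$ lies in at most $\binom{T-1}{r-3}$ members of $S\subseteq [T]^{(r-2)}$, I get
\[
\sum_{f\in S}(x_1^{r-2}-x_f) \;\leq\; x_1^{r-3}\binom{T-1}{r-3}\sum_{i=1}^T(x_1-x_i) \;=\; x_1^{r-3}\binom{T-1}{r-3}(Tx_1-1),
\]
using $\sum_{i=1}^T x_i=1$. Note that the worst case $S=[T]^{(r-2)}$ is essentially what this bound describes; the flexibility in $S$ is absorbed by the $\binom{T-1}{r-3}$ factor.

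To finish, I would feed in the bounds from Lemma~\ref{x1}: $T=t+C$ and $x_1<1/(t-\alpha)$ give $Tx_1-1<(C+\alpha)/(t-\alpha)$, while $x_1^{r-3}\binom{T-1}{r-3}$ is at most a constant depending only on $r$ (it is essentially $\binom{T-1}{r-3}/(t-\alpha)^{r-3}=O(1)$). Dividing both sides by $x_1\geq 1/T$ costs at most an extra factor of $T=O(t)$, which is cancelled by the $O(1/t)$ in $Tx_1-1$; explicitly,
\[
\frac{1}{x_1}\sum_{f\in S}(x_1^{r-2}-x_f) \;\leq\; x_1^{r-4}\binom{T-1}{r-3}(Tx_1-1) \;=\; O(1)
\]
uniformly in $t$. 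Setting $\beta$ to any uniform upper bound (for $t\geq t_0$, with the finitely many small-$t$ cases absorbed) yields the lemma.

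No step here presents a genuine obstacle: the telescoping identity is standard, and everything after it is routine once Lemma~\ref{x1} is invoked. The only mildly delicate point is the case $r=4$, where the $x_1^{r-4}$ factor degenerates to $1$ and the cancellation against the $O(t)$ combinatorial factor must come entirely from the $(Tx_1-1)=O(1/t)$ term; but this still goes through.
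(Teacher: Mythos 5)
Your proof is correct, but it takes a genuinely different route from the paper. The paper argues by contradiction and globally: assuming $\sum_{f\in S}(x_1^{r-2}-x_f)\geq\beta x_1$, it expands $1=(x_1+\cdots+x_T)^{r-2}$, compares the multinomial terms against $(r-2)!\sum_{f\in S}x_f$ plus $x_1^{r-2}$ for the remaining terms, and derives $1\leq (Tx_1)^{r-2}-(r-2)!\beta x_1<1$ for $\beta>\frac{C+\alpha}{(r-3)!}$ and $t$ large, using Lemma \ref{x1}(i),(ii). You instead bound each summand directly via the telescoping identity $x_1^{r-2}-x_f\leq x_1^{r-3}\sum_{i\in f}(x_1-x_i)$, then double count: each vertex lies in at most $\binom{T-1}{r-3}$ members of $S$, and $\sum_{i=1}^T(x_1-x_i)=Tx_1-1<\frac{C+\alpha}{t-\alpha}$ by Lemma \ref{x1}. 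Both arguments hinge on the same inputs ($T=t+C$, $x_1<\frac{1}{t-\alpha}$, $x_1\geq 1/T$) and, pleasingly, your bound
\[
x_1^{r-4}\binom{T-1}{r-3}(Tx_1-1)\;\longrightarrow\;\frac{C+\alpha}{(r-3)!}
\]
recovers asymptotically exactly the constant the paper chooses, so the two are quantitatively equivalent; your version is more direct (no contradiction, no multinomial bookkeeping) and adapts just as well to the sharpened setting $T=t$, $\alpha=r-1$ used later (Lemma \ref{betat}), while the paper's version avoids the per-vertex degree count. Two cosmetic remarks: like the paper's proof, yours is only claimed for $t\geq t_0$ (which is all that is ever used; for $t\leq t_0$ both $T$ and $m$ are bounded in terms of $r$, so the finitely many cases can indeed be absorbed into $\beta$), and the phrase about ``dividing by $x_1\geq 1/T$ costing a factor $T$'' is looser than what you actually do --- you simply drop one power of $x_1$ --- but the displayed estimate is the correct one, including the boundary case $r=4$.
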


\begin{proof}
Let $\beta$ be a constant $>\frac{C+\alpha}{(r-3)!}$.
 We will prove it by contradiction.
Suppose that there is $S\subseteq [T]^{(r-2)}$
such that $$\sum_{f\in S}(x_1^{r-2}-x_f)\geq\beta x_1.$$
We have
  \begin{equation}
    \label{eq:S}
  \sum_{f\in S}x_f \leq |S|x_1^{r-2}-\beta x_1.
  \end{equation}
Thus,
\begin{align*}
1&=(x_1+x_2+\cdots+x_T)^{r-2}\\
&\leq(r-2)!\sum_{f\in S}x_f+(T^{r-2}-(r-2)!|S|)x_1^{r-2}\\
&\leq (r-2)!(|S|x_1^{r-2}-\beta x_1) + (T^{r-2}-(r-2)!|S|)x_1^{r-2}~~~~~~~(by~\eqref{eq:S})\\
&= (T x_1)^{r-2}- (r-2)! \cdot \beta x_1.
\end{align*}
On the other hand, note 
$\frac{1}{t-\alpha}>x_1\geq \frac{1}{T}=\frac{1}{t+C}$. We have
\begin{align*}
  (T x_1)^{r-2}- (r-2)!\beta x_1
&<\left(1+\frac{C+\alpha}{t-\alpha}\right)^{r-2} - (r-2)!\cdot \beta \frac{1}{t+C}\\
&= 1+ (r-2)\frac{C+\alpha}{t-\alpha} +O(t^{-2}) - (r-2)!\cdot \beta \frac{1}{t-\alpha} +O(t^{-2})\\
&= 1- \frac{(r-2)!}{t-\alpha}\left(\beta- \frac{C+\alpha}{(r-3)!}\right)  +O(t^{-2})\\
&<1.
\end{align*}
Contradiction.
\end{proof}

Let $s=\max\{i\colon \{T-i-(r-2),\ldots,T-i-1,T-i,T\}\in E^c\}$ and $S=\{T-s,T-s+1,\ldots,T-1,T\}$.
We have the following lemma.
\begin{lem} \label{gamma} For $s$ and $S$ defined above, we have
  \begin{enumerate}
  \item Any non-edge $f\in E^c$ must intersect $S$ in at least two elements.
  \item  We have
    \begin{equation}
      \label{eq:XT-1}
    x_{T-1}\geq\gamma x_1,
    \end{equation}
    where $\gamma:=\prod_{k=0}^{r-2}\left(1-\frac{C_0+1}{s+k+1}\right)$.
  \end{enumerate}
\end{lem}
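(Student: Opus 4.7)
The plan is to prove the two parts of Lemma~\ref{gamma} separately.

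For Part~(1), I argue by contradiction exploiting the right-shift closure of non-edges: since $E$ is left-compressed, if $f\in E^c$ with $i\in f$, $j\notin f$ and $i<j$, then $(f\setminus\{i\})\cup\{j\}\in E^c$ as well (otherwise left-compression would put $f$ back into $E$). Assume for contradiction that some non-edge $f$ has $|f\cap S|\leq 1$ and distinguish two subcases. If $|f\cap S|=0$, so that $f\subseteq[T-s-1]$, iterate right-shifts to reach the rightmost $r$-subset $\{T-s-r,\dots,T-s-1\}$ of $[T-s-1]$ and then shift the element $T-s-1$ to $T$; the result $\{T-s-r,\dots,T-s-2,T\}$ has the form $\{T-s'-(r-2),\dots,T-s',T\}$ with $s'=s+2>s$, contradicting the maximality of $s$. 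If $|f\cap S|=1$, write $f=f'\cup\{v\}$ with $f'\subseteq[T-s-1]$, $|f'|=r-1$, and $v\in S$; right-shift $f'$ to $\{T-s-r+1,\dots,T-s-1\}$ and push $v$ to $T$ to obtain $\{T-s-r+1,\dots,T-s-1,T\}$, of the same forbidden form with $s'=s+1>s$.

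For Part~(2), the starting point is Lemma~\ref{xk} applied at the $r-1$ successive indices $k=s+j$ for $j=0,1,\dots,r-2$, which produces
\[
x_{T-s-j}>\frac{s+j-C_0}{s+j+1}x_1,\qquad j=0,1,\dots,r-2.
\]
These are precisely the factors appearing in $\gamma$. The goal is to convert this family of bounds on $x_{T-s-j}$ into a single lower bound on $x_{T-1}$. The approach is to combine them with Part~(1) through the identity $(x_i-x_j)w(G_{ij},\vec x)=w(G_{i\setminus j},\vec x)$ from Lemma~\ref{four}(iii) applied along a carefully chosen chain. By Part~(1), for any $j\in S$ every non-edge $f\cup\{j\}$ satisfies $|f\cap S|\geq 1$, which forces $E_{i\setminus j}$ to be supported on $(r-1)$-sets hitting $S$ nontrivially; this makes the "deficiency" $w(G_{i\setminus j},\vec x)$ small relative to $w(G_{ij},\vec x)$ and pins down the differences $x_i-x_j$ within the chain. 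Morally, each incremental enlargement of the pivot set $S$ by one of the vertices $T-s-1,\dots,T-s-(r-2)$ contributes exactly one factor $\frac{s+k-C_0}{s+k+1}$, and multiplying these $r-1$ factors yields $\gamma\,x_1$.

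The main obstacle is matching the algebra precisely. A direct application of Lemma~\ref{xk} at $k=1$ gives only the much weaker estimate $x_{T-1}>\frac{1-C_0}{2}x_1$, which is typically vacuous because $C_0=C+\alpha-1$ can be large. The delicate step is to set up the iteration so that Part~(1) is used to peel off exactly one Lemma~\ref{xk}--style factor per incremental pivot enlargement, without slack; this amounts to showing that the "missing" weight in each extended inequality $1\leq (T-s-k-1)x_1+(s+k+1)x_{T-s-k}$ is correctly allocated between the extended pivot and $x_{T-1}$, so that the $r-1$ bounds compose multiplicatively into $\gamma x_1$ rather than into a weaker product. This bookkeeping, carried out against the structural constraint from Part~(1), is where I expect the bulk of the work to lie.
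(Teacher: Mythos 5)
Your Part 1 is correct and is essentially the paper's argument (right-shift closure of non-edges plus maximality of $s$). Part 2, however, is a plan rather than a proof, and the plan is missing the idea that actually carries the paper's proof. The paper does not iterate the identity of Lemma \ref{four}(iii) along a chain. Instead it exploits the extremality of $G$ among all $r$-graphs with $m$ edges: since $G$ covers the pair $\{T-1,T\}$, there is $\{i_1,\ldots,i_{r-2}\}\in E_{T-1,T}$, and if the edge $\{i_1,\ldots,i_{r-2},T-1,T\}$ had strictly smaller weight under $\vec x$ than the non-edge $\{T-s-(r-2),\ldots,T-s,T\}$ (which lies in $E^c$ by the very definition of $s$), then swapping this edge for that non-edge would give an $r$-graph with the same number of edges and strictly larger weight polynomial at $\vec x$, hence Lagrangian larger than $\lambda_r(m)$ --- a contradiction. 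This yields $x_1^{r-2}x_{T-1}x_T\geq x_{i_1}\cdots x_{i_{r-2}}x_{T-1}x_T\geq x_{T-s-(r-2)}\cdots x_{T-s}x_T$, and then Lemma \ref{xk} applied at $k=s,s+1,\ldots,s+r-2$ turns the right-hand side into $\gamma x_1^{r-1}x_T$, giving \eqref{eq:XT-1}.

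Your sketch never invokes any edge/non-edge weight comparison, which is the step that uses $G$ being a maximizer over hypergraphs (not merely $\vec x$ being optimal for the fixed $G$), and it is exactly the step you defer: you explicitly state that the bookkeeping making the $r-1$ factors compose into $\gamma x_1$ is ``where the bulk of the work'' lies. It is not evident that the identities of Lemma \ref{four}(iii), combined with Part 1, can produce this multiplicative bound at all; note also that in the paper Part 1 plays no role in the proof of item 2 (it is used later, in the proofs of Lemma \ref{keylemma} and Theorem \ref{mainthm}), whereas the specific non-edge $\{T-s-(r-2),\ldots,T-s,T\}$ coming from the definition of $s$ is essential. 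So there is a genuine gap: the replacement (swap) argument, which is the heart of item 2, is absent from your proposal.
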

\begin{proof}
By the choice of $s$, we know $\{T-s-(r-2),\ldots,T-s-1,T-s,T\}\in E^c$
but $\{T-s-(r-1),\ldots,T-s-2,T-s-1,T\}\in E$. We now prove item 1 by contradiction.
Suppose not, say there is $f=\{i_1,i_2,\ldots, i_r\}\in E^c$ such that
$i_1<i_2<\cdots<i_{r-1}<T-s$. Since $G$ is left-compressed, $\{T-s-(r-1),\ldots,T-s-2,T-s-1,T\}\in E^c$. Contradiction!

Since  $G$ covers the pair $\{T-1,T\}$, there is an $(r-2)$-tuple
$\{i_1,i_2,\ldots,i_{r-2}\}$ $\in E_{T-1,T}$. We have
 \begin{align}\label{eq:x1xt-1xt}
x_{i_1}\cdots x_{i_{r-2}}x_{T-1}x_T-x_{T-s-(r-2)}x_{T-s-(r-3)}\cdots x_{T-s}x_T\geq 0.
\end{align}
Otherwise by replacing the edge $\{i_1,i_2,\ldots,i_{r-2},T-1,T\}$ with the non-edge $\{T-s-(r-2),T-s-(r-3),\ldots,T-s, T\}$,
we get another $r$-graph with the same number of edges whose Lagrangian is strictly greater than the Lagrangian of $G$. Contradiction!

Combining Inequalities \eqref{eq:x1xt-1xt} and \eqref{eq:xk}, we get
$$x_{T-1}\geq\frac{x_{T-s-(r-2)}x_{T-s-(r-3)}\cdots x_{T-s}}{x_1^{r-2}}> \gamma x_{1}.$$
\end{proof}

We have the following estimation of $\gamma$:
\begin{equation}
  \label{eq:gamma}
\gamma=\prod_{k=0}^{r-2}\left(1-\frac{C_0+1}{s+k+1}\right)=1-\frac{(C_0+1)(r-1)}{s}+ O\left(\frac{1}{s^2}\right).
\end{equation}

\begin{lem}\label{T>t}
  If $T>t$, then $|E^c_{T\backslash (T-1)}| -|E_{T-1,T}|\geq \frac{T-r}{T(r-1)}{T-2 \choose r-2}$.
\end{lem}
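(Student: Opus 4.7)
The plan is to recast the asserted inequality as a statement about the vertex-degree difference at $T-1$ and $T$, and then deduce this degree difference by combining a left-compression degree-count with the optimality conditions on $\vec x$. Set $A := E_{T-1,T}\subseteq [T-2]^{(r-2)}$ and, for $i\in\{T-1,T\}$, $B_i := \{f\in[T-2]^{(r-1)}\colon f\cup\{i\}\in E\}$. Since $G$ is left-compressed, the shift $L_{T-1,T}$ gives $B_T\subseteq B_{T-1}$, which yields the identifications
\[
|E^c_{T\backslash(T-1)}| = |B_{T-1}|-|B_T| = d_{T-1}-d_T \qquad\text{and}\qquad |E_{T-1,T}|=|A|,
\]
where $d_i = |A|+|B_i|$ is the degree of vertex $i$. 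Using the binomial identity $(T-r)\binom{T-2}{r-2} = (r-1)\binom{T-2}{r-1}$, the target reduces to the degree-gap inequality
\[
d_{T-1}-d_T-|A| \;\geq\; \tfrac{1}{T}\binom{T-2}{r-1}.
\]

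For the upper bound on $d_T$, I would use the monotonicity $d_1\geq d_2\geq\cdots\geq d_T$ from left-compression together with $\sum_{i=1}^T d_i = rm$ to get $d_T\leq rm/T$. Substituting $T\geq t+1$ and $m \leq \binom{t}{r}-\binom{t-2}{r-2}$ and using $r\binom{T-1}{r} = (T-1)\binom{T-2}{r-1}$ sharpens this to a bound of the form $d_T \leq \frac{T-1}{T}\binom{T-2}{r-1} - \frac{r}{T}\binom{t-2}{r-2}$, guaranteeing the needed slack on the $d_T$ side.

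The delicate step is the matching lower bound on $d_{T-1}-d_T$, which pure degree-counting cannot supply since monotonicity alone only yields an upper bound on $d_{T-1}-d_T$. Here I would invoke the Lagrangian identity of Lemma~\ref{four}(iii),
\[
(x_{T-1}-x_T)\sum_{g\in A} x_g \;=\; \sum_{f\in B_{T-1}\setminus B_T} x_f,
\]
and combine it with the weight-concentration bounds $x_i\in[\gamma x_1, x_1]$ for $i\leq T-1$ from Lemmas~\ref{xk} and~\ref{gamma}, the normalization $x_1\asymp 1/T$, and the deficit estimate of Lemma~\ref{beta}. Sandwiching both sides in terms of counts rescaled by $x_1^{r-1}$ transfers the weighted equality into a combinatorial inequality of the form $|B_{T-1}\setminus B_T| \geq |A| + \binom{T-2}{r-1}/T$, which is exactly what we need.

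The main obstacle I expect is controlling the error terms. Since $x_1\asymp 1/T$, the deficits introduced by Lemma~\ref{beta} are $O(x_1) \asymp 1/T$ per weight, and after the $x_1^{-(r-1)}\asymp T^{r-1}$ rescaling used to convert weights back into counts, one must verify that the accumulated error stays strictly below the target main term $\binom{T-2}{r-1}/T \asymp T^{r-2}$. Because the constants $\beta,\gamma,C_0$ from the previous lemmas depend only on $r$, this quantitative control should hold for $t$ sufficiently large; subtracting the upper bound on $d_T$ from the lower bound on $d_{T-1}$ then yields the stated inequality.
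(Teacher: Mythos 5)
Your reductions are sound as far as they go: with $A=E_{T-1,T}$ and $B_i$ as you define them, left-compression does give $B_T\subseteq B_{T-1}$, hence $|E^c_{T\backslash(T-1)}|=|B_{T-1}|-|B_T|=d_{T-1}-d_T$, and via $(T-r)\binom{T-2}{r-2}=(r-1)\binom{T-2}{r-1}$ the lemma is indeed equivalent to $d_{T-1}-d_T-|A|\ge\frac1T\binom{T-2}{r-1}$; the bound $d_T\le rm/T$ is the same minimum-degree estimate the paper uses. The genuine gap is exactly the step you call delicate. Lemma \ref{four}(iii) with $i=T-1$, $j=T$ reads $(x_{T-1}-x_T)\sum_{g\in A}x_g=\sum_{f\in B_{T-1}\setminus B_T}x_f$, and all the auxiliary facts you cite (Lemmas \ref{x1}, \ref{xk}, \ref{beta}, \ref{gamma}) pin the individual weights between $\gamma x_1$ and $x_1$; none of them gives a positive lower bound on the \emph{gap} $x_{T-1}-x_T$. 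Sandwiching therefore yields only $|B_{T-1}\setminus B_T|\le\gamma^{-(r-1)}\frac{x_{T-1}-x_T}{x_1}\,|A|$, an upper bound comparable to $|A|$ — the wrong direction entirely, and nowhere near the needed $|B_{T-1}\setminus B_T|\ge |A|+\frac1T\binom{T-2}{r-1}$, whose excess term is of order $T^{r-2}$. Worse, in the case $x_{T-1}=x_T$, which none of the quoted lemmas excludes, the identity forces $\sum_{f\in B_{T-1}\setminus B_T}x_f=0$ and hence $B_{T-1}=B_T$ on the support, i.e.\ $d_{T-1}=d_T$ — the opposite of what you must prove. So any argument along your lines must first establish a quantitative strict inequality $x_{T-1}-x_T\gg 0$ (or rule that degenerate case out some other way), and your proposal contains no mechanism for that; the "error control" you anticipate is not the issue.

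For comparison, the paper's proof never estimates the degree difference $d_{T-1}-d_T$ and uses no weighting at all: it is a one-line count inside $[T-2]^{(r-1)}$, writing $|E^c_{T\backslash(T-1)}|-|E_{T-1,T}|=\binom{T-2}{r-1}-|E_T|$ and finishing with $|E_T|\le rm/T\le\frac tT\binom{T-2}{r-1}$ and $T\ge t+1$. In your notation that first equality amounts to asserting $|B_{T-1}|=\binom{T-2}{r-1}$, i.e.\ the paper's computation in effect counts all $f\in[T-2]^{(r-1)}$ with $f\cup\{T\}\notin E$, not only those with $f\cup\{T-1\}\in E$. Your reformulation is faithful to the lemma as stated and is therefore aiming at a stronger-looking target than the count the paper actually carries out — which is precisely why your attempt stalls at a step that degree counting plus the Lagrangian identities, as you deploy them, cannot close.
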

\begin{proof} Since $G$ is left-compressed, then
$$|E_T|\leq \frac{rm}{T}\leq \frac{r}{T}{t\choose r}=\frac{t}{T}{t-1\choose r-1}
\leq \frac{t}{T}{T-2\choose r-1}.$$
In the last step, we apply the assumption $t\leq T-1$.
Thus, we have
\begin{align*}
  |E^c_{T\backslash (T-1)}| -|E_{T-1,T}| &= {T-2 \choose r-1}  - (|E_{T\backslash (T-1)}| +|E_{T-1,T}|)\\
&={T-2\choose r-1} - |E_T|\\
&\geq {T-2\choose r-1} - \frac{t}{T}{T-2\choose r-1}\\
&=\frac{T-t}{T} {T-2\choose r-1}\\
&=\frac{(T-t)(T-r)}{(r-1)T}{T-2\choose r-2}\\
&\geq \frac{T-r}{T(r-1)}{T-2 \choose r-2}.
\end{align*}
\end{proof}

\begin{proof}[Proof of Lemma \ref{keylemma}:]


Let $G$, $\vec x$, $m$, $t$, and $T$ be as defined before. Let $\eta:=\left\lceil 2\sqrt {\frac{(C_0+1)(r-1)\beta}{(r-3)!}}t^{r-5/2}\right\rceil$.

We will prove Lemma \ref{keylemma} by contradiction. Assume $T>t$. By Lemma \ref{T>t}, we have
\begin{equation}
  \label{eq:F}
|E^c_{T\backslash (T-1)}| -|E_{T-1,T}|\geq \frac{T-r}{T(r-1)}{T-2 \choose r-2}>\eta.
\end{equation}
This is possible for $t$ sufficiently large since $\frac{T-r}{T(r-1)}{T-2 \choose r-2}=\Theta(t^{r-2})$
and $\eta=\Theta(t^{r-2.5})$.

Let $b=|E_{T-1,T}|$. Pick any $F\subseteq \{f\cup \{T\}\colon f\in E^c_{T\backslash (T-1)}\}$  of size $b+\eta$. This is possible
because of Inequality \eqref{eq:F}.

Let $G'$ be an $r$-graph obtained from $G$ by deleting all edges in $\{f\cup \{T-1,T\}\colon f\in E_{T-1,T}\}$
and adding all $r$-tuples in $F$ as edges.
The main proof is to show the following inequality:
\begin{equation}
  \label{eq:main}
  w(G,\vec x)\leq w(G',\vec x).
\end{equation}

Now we will prove Inequality \eqref{eq:main}. We divide it into two cases.

\noindent
{\bf Case 1:} $\beta s<\eta x_1^{r-3}$. By Lemma \ref{gamma} item 1, every non-edge intersects $S$ in at least two elements. This implies $F$ can be partitioned into $s-1$ parts:
$$F=\cup_{i=2}^s \{f\cup \{T-i, T\}\colon f\in F_i\},$$ 
where $F_i:=\{f\in [T-i-1]^{(r-2)}\colon f \cup \{T-i,T\} \in F\}$.
We have
\begin{align*}
  w(G',\vec x) &= w(G, \vec x) - \sum_{e\in E_{T-1,T}} x_ex_{T-1}x_T+ \sum_{f\in F} x_f\\
               &\geq w(G, \vec x)+\eta x_1^{r-2}x_{T-1}x_T -\sum_{f\in F}(x_1^{r-2}x_{T-1}x_T-x_f)\\
               &\geq w(G, \vec x)+\eta x_1^{r-2}x_{T-1}x_T - \sum_{i=2}^s
                 \sum_{f'\in F_i}(x_1^{r-2}-  x_{f'})  x_{T-1}x_T\\
                &\geq w(G, \vec x)+\eta x_1^{r-2}x_{T-1}x_T - x_{T-1}x_T\sum_{i=2}^s \beta x_1   ~~~~~~~~~~~~~~~~~~~(\mbox{by}~ \eqref{eq:sumxk})\\
                &>w(G, \vec x)+ \left(\eta x_1^{r-3}-s\beta\right)x_1x_{T-1}x_T\\
                &>w(G, \vec x).
\end{align*}

\noindent
{\bf Case 2:} $\beta s\geq \eta x_1^{r-3}$.  Since $x_1\geq \frac{1}{T}$, we have
\begin{equation}
  \label{eq:s}
s\geq \frac{\eta}{\beta} x_1^{r-3}\geq \frac{\eta}{\beta} \frac{1}{T^{r-3}}.
\end{equation}

We first prove an inequality:
\begin{equation}
  \label{eq:gammab}
   \gamma^{r-2}\eta > (1-\gamma^{r-2})b.
\end{equation}
\begin{align*}
&\hspace*{-5mm}  \gamma^{r-2}\eta - (1-\gamma^{r-2})b\\
  &\geq  \gamma^{r-2}\eta - (1-\gamma^{r-2}){T-2\choose r-2}\\
  &>\gamma^{r-2} \left( \eta- (\gamma^{-(r-2)} -1) \frac{T^{r-2}}{(r-2)!} \right)\\
  &>\gamma^{r-2} \left( \eta- \left(\frac{(C_0+1)(r-1)(r-2)}{s} + O\left(\frac{1}{s^2}\right) \right)
    \frac{T^{r-2}}{(r-2)!} \right)
    ~~~~~(\mbox{by}~ \eqref{eq:gamma})\\
  &=\frac{\gamma^{r-2}}{s}  \left( s\eta- \left((C_0+1)(r-1)(r-2) + O\left(\frac{1}{s}\right) \right)
    \frac{T^{r-2}}{(r-2)!} \right)~~(\mbox{using}~\eqref{eq:s})\\
  &\geq\frac{\gamma^{r-2}}{s}  \left(\frac{1}{\beta T^{r-3}}\eta^2- \left((C_0+1)(r-1)(r-2) + O\left(\frac{1}{s}\right) \right)
    \frac{T^{r-2}}{(r-2)!} \right)\\
  &>0.
\end{align*}
In the last step, we apply the definition of $\eta $ and get
$$\eta^2\geq 4 \frac{(C_0+1)(r-1)(r-2)\beta}{(r-2)!}t^{2r-5}>
2 \frac{(C_0+1)(r-1)(r-2)\beta}{(r-2)!} T^{2r-5}.$$
Now we are ready to estimate $ w(G',\vec x)$:
\begin{align*}
  w(G',\vec x)  &= w(G, \vec x) - \sum_{e\in E_{T-1,T}} x_ex_{T-1}x_T+ \sum_{f\in F} x_f\\
                &>w(G, \vec x)-b x_1^{r-2}x_{T-1}x_T + (b+\eta) x_{T-1}^{r-1}x_T\\
                &=w(G, \vec x)+x_{T-1}x_T\left((b+\eta) x_{T-1}^{r-2}-b x_1^{r-2}\right)\\
                &\geq w(G, \vec x)+ x_{T-1}x_T\left((b+\eta) \gamma^{r-2} x_1^{r-2}- b x_1^{r-2}\right)~~~~~~(by~\eqref{eq:XT-1})\\
                &=w(G, \vec x)+ x_1^{r-2}x_{T-1}x_T\left(\gamma^{r-2}\eta -(1-\gamma^{r-2})b\right)~~~~~~(by~\eqref{eq:gammab})\\
               &> w(G, \vec x).
\end{align*}
Therefore, Inequality \eqref{eq:main} holds in any circumstances.

Note that  $G'$ does not cover the pair $\{T-1, T\}$.
Applying Lemma \ref{G/ij}, we have
$$\lambda(G)=w(G, \vec x)<  w(G',\vec x)\leq \lambda(G')\leq \lambda(G'_{/(T-1)T}).$$
By the construction of $G'$, the added edges  are from $F\subseteq \{f\cup \{T\}\colon f\in E^c_{T\backslash (T-1)}\}$.
These edges have the form of $f\cup \{T\}$, where $f\cup \{T-1\}$ is also an edge in $G$.
After gluing $T-1$ and $T$ together, both edges $f\cup \{T\}$ and $f\cup \{T-1\}$ are glued into one edge $f\cup \{v\}$. We have
$$|E(G'_{/(T-1)T})|\leq |E(G')|-|F|\leq |E(G)|.$$
Contradiction! This completes the proof of Lemma~\ref{keylemma}.

\end{proof}

\section{Proof of Theorem \ref{mainthm}}
Assume $t\geq t_0$.  Let $G=(V,E)$ be an $r$-graph with $m$ edges satisfying ${{t-1}\choose r}\leq m\leq {t\choose r}-{t-2\choose r-2}$ and $\lambda(G)=\lambda_r(m)>\lambda([t-1]^{(r)})$.
Let $\vec x=(x_1,\ldots, x_n)$ be an optimal legal weighting for $G$ that uses exactly $T$ nonzero weights (i.e., $x_1\geq \cdots \geq x_T> x_{T+1}=\cdots =x_n=0$).  By Lemma \ref{keylemma}, we may assume $T=t$. In addition, we may assume $G$ is left-compressed
by Lemma \ref{four}(i).

Since $T=t$, by Lemma \ref{x1}(iii), we have
\begin{equation}
  \label{t:x1}
x_1<\frac{1}{t-r+1},
\end{equation}

and we may improve Lemma \ref{xk} as follows.
\begin{lem} \label{xkt}
  For any $k\in[t-1]$, we have
  \begin{equation}\label{eq:xkt}
   x_{t-k}>\frac{k-r+2}{k+1}x_1.
  \end{equation}
\end{lem}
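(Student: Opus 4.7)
The plan is to repeat the averaging argument used in the proof of Lemma~\ref{xk}, but now exploiting the sharper bound on $x_1$ that becomes available once $T=t$. Recall that in Lemma~\ref{xk} we only had $x_1<\frac{1}{t-\alpha}$ from Lemma~\ref{x1}(ii), which gave the looser constant $C_0=C+\alpha-1$. Under the hypothesis $T=t$, Lemma~\ref{x1}(iii) together with equation~(\ref{t:x1}) gives $x_1<\frac{1}{t-r+1}$, and this directly improves the constant in the averaging step.

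Concretely, I would start from the identity
\[
1=\sum_{i=1}^{t-k-1} x_i+\sum_{i=t-k}^{t} x_i,
\]
bound the first sum by $(t-k-1)x_1<\frac{t-k-1}{t-r+1}$ using~(\ref{t:x1}), and bound the second sum by $(k+1)x_{t-k}$ using the ordering $x_1\geq x_2\geq \cdots \geq x_t$. Combining these and solving for $x_{t-k}$ yields
\[
x_{t-k}>\frac{1}{k+1}\left(1-\frac{t-k-1}{t-r+1}\right)=\frac{k-r+2}{(k+1)(t-r+1)}.
\]
Finally, applying $\frac{1}{t-r+1}>x_1$ once more gives $x_{t-k}>\frac{k-r+2}{k+1}\,x_1$, as desired. (For $k<r-2$ the inequality is trivial since $x_{t-k}\geq 0$ while the right-hand side is non-positive; for $k=r-2$ the right-hand side is $0$ and $x_{t-k}>0$ since $t-k\leq t=T$ means that coordinate is nonzero.)

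There is no real obstacle: the argument is a carbon copy of Lemma~\ref{xk} with the improved parameter $\alpha$ replaced by $r-1$. The only thing to watch is that the strict inequality $x_1<\frac{1}{t-r+1}$ is needed in two places—once to bound the prefix sum $\sum_{i\leq t-k-1} x_i$ and once to convert the numerical bound on $x_{t-k}$ back into a multiple of $x_1$—but both uses are valid directly from~(\ref{t:x1}).
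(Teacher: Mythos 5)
Your proposal is correct and is exactly the argument the paper intends: Lemma~\ref{xkt} is just Lemma~\ref{xk} rerun with $T=t$ and the sharper bound $x_1<\frac{1}{t-r+1}$ from Lemma~\ref{x1}(iii), i.e.\ with $C=0$, $\alpha=r-1$, hence $C_0=r-2$. Your averaging computation and the two uses of the strict bound on $x_1$ match the paper's proof of Lemma~\ref{xk} verbatim, so nothing further is needed.
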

Let $T=t$, $C=0$, $\alpha=r-1$, $C_0=r-2$, and $\beta=\frac{r}{(r-3)!}$ in the proof of Lemma \ref{beta}. Then we can get the following Lemma improving Lemma \ref{beta}.
\begin{lem} \label{betat}
For any subset $S\subseteq [t]^{(r-2)}$, we have
  \begin{equation}
   \label{t:beta}
    \sum_{f\in S} (x_1^{r-2}-x_f)<\frac{r}{(r-3)!} x_1.
  \end{equation}
\end{lem}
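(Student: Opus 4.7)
The plan is to replay the proof of Lemma \ref{beta} verbatim, but with the sharper constants that are now available: $T=t$ (from Lemma \ref{keylemma}) and $x_1 < 1/(t-r+1)$ (from \eqref{t:x1}). In the notation of the earlier framework this amounts to taking $C=0$ and $\alpha = r-1$, so that $C_0 = C+\alpha-1 = r-2$. The proof of Lemma \ref{beta} required $\beta > (C+\alpha)/(r-3)!$; with the new constants this threshold becomes $(r-1)/(r-3)!$, and the claimed value $\beta = r/(r-3)!$ clears it by exactly $1/(r-3)!$.

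Concretely, I would assume for contradiction that some $S \subseteq [t]^{(r-2)}$ satisfies $\sum_{f \in S}(x_1^{r-2} - x_f) \geq \beta x_1$, i.e.\ $\sum_{f \in S} x_f \leq |S| x_1^{r-2} - \beta x_1$. Then I would expand
\[
1 = (x_1 + x_2 + \cdots + x_t)^{r-2}
\]
multinomially: the ordered $(r-2)$-tuples of distinct indices whose underlying set lies in $S$ contribute exactly $(r-2)! \sum_{f \in S} x_f$, while the remaining $t^{r-2} - (r-2)!|S|$ terms are each bounded above by $x_1^{r-2}$. Substituting the contradiction hypothesis yields the key estimate
\[
1 \leq (tx_1)^{r-2} - (r-2)!\, \beta x_1.
\]

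It remains to bound the right-hand side strictly below $1$. Using $x_1 < 1/(t-r+1)$, I have $tx_1 < 1 + (r-1)/(t-r+1)$, so $(tx_1)^{r-2} \leq 1 + (r-2)(r-1)/(t-r+1) + O(t^{-2})$; using $x_1 \geq 1/t$, I have $(r-2)!\beta x_1 \geq (r-2)!\beta/(t-r+1) + O(t^{-2})$. Combining these,
\[
(tx_1)^{r-2} - (r-2)!\beta x_1 \leq 1 - \frac{(r-2)!}{t-r+1}\left(\beta - \frac{r-1}{(r-3)!}\right) + O(t^{-2}) < 1
\]
for $t$ sufficiently large (guaranteed by $t\geq t_0$), contradicting the display above. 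The argument is essentially constant-tracking; the only conceptual check is that $r/(r-3)! > (r-1)/(r-3)!$, which is trivial, so I do not anticipate a real obstacle here.
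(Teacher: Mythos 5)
Your proof is correct and is essentially the paper's own argument: the paper proves Lemma \ref{betat} precisely by substituting $T=t$, $C=0$, $\alpha=r-1$, $C_0=r-2$, and $\beta=\frac{r}{(r-3)!}$ into the proof of Lemma \ref{beta}, exactly the constant-tracking you carry out. Your verification that $\frac{r}{(r-3)!}$ exceeds the required threshold $\frac{r-1}{(r-3)!}$, with the contradiction argument and the asymptotic estimate for large $t$, matches the intended proof.
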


With $T=t$, we have $s=\max\{i\colon \{t-i-(r-2),\ldots,t-i-1,t-i,t\}\in E^c\}$ and $S=\{t-s,t-s+1,\ldots,t-1,t\}$. Lemma \ref{gamma} can be improved to:
\begin{lem} \label{gammat} For $s$ and $S$ defined above, we have
  \begin{enumerate}
  \item Any non-edge $f\in E^c$ must intersect $S$ in at least two elements.
  \item  We have
    \begin{equation}
      \label{eq:xt-1}
    x_{t-1}>\gamma x_1,
    \end{equation}
    where $\gamma:=\prod_{k=0}^{r-2}\left(1- \frac{r-1}{s+k+1}\right)$.
  \end{enumerate}
\end{lem}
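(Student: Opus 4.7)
The plan is to adapt the proof of Lemma~\ref{gamma} to the refined setting $T=t$, using Lemma~\ref{xkt} in place of Lemma~\ref{xk}. Structurally, the proof of Lemma~\ref{gamma} uses only (a) the defining maximality of $s$, (b) left-compressedness of $G$, (c) covering of the pair $\{T-1,T\}$, (d) a swap-optimality argument between an edge and the extremal non-edge, and (e) a lower bound on $x_{T-k}$. Ingredients (a)--(d) transfer verbatim after replacing $T$ by $t$, while ingredient (e) is upgraded by Lemma~\ref{xkt}, which is exactly what shrinks the constant in $\gamma$ from $C_0+1$ to $r-1$ (since $C=0$, $\alpha=r-1$, and hence $C_0=r-2$ when $T=t$).

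For item 1, I would argue by contradiction exactly as in Lemma~\ref{gamma}. If some non-edge $f\in E^c$ meets $S$ in at most one element, then $f$ contains at least $r-1$ coordinates in $[t-s-1]$. Since $E$ is left-compressed, $E^c$ is closed under pushing elements to the right, so we may first push the top coordinate of $f$ up to $t$ and then push the remaining $r-1$ coordinates up to the rightmost $(r-1)$-subset $\{t-s-(r-1),\ldots,t-s-1\}$ of $[t-s-1]$. This produces the non-edge $\{t-s-(r-1),\ldots,t-s-1,t\}\in E^c$, contradicting the maximality of $s$, which forces this particular set to lie in $E$.

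For item 2, since $G$ covers the pair $\{t-1,t\}$ by Lemma~\ref{four}(i), there exists $\{i_1,\ldots,i_{r-2}\}\in E_{t-1,t}$. Swapping the edge $\{i_1,\ldots,i_{r-2},t-1,t\}\in E$ with the non-edge $\{t-s-(r-2),\ldots,t-s,t\}\in E^c$ gives an $r$-graph with the same number of edges; optimality of $\lambda(G)=\lambda_r(m)$ then forces
\[
x_{i_1}\cdots x_{i_{r-2}}\,x_{t-1}x_t \;\geq\; x_{t-s-(r-2)}\cdots x_{t-s}\,x_t,
\]
and since each $x_{i_j}\leq x_1$, this yields $x_{t-1}\geq x_{t-s-(r-2)}\cdots x_{t-s}/x_1^{r-2}$. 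Applying Lemma~\ref{xkt} with $k=s+k'$ for $k'=0,1,\ldots,r-2$ gives
\[
x_{t-s-k'}\;>\;\frac{s+k'-r+2}{s+k'+1}\,x_1\;=\;\left(1-\frac{r-1}{s+k'+1}\right)x_1,
\]
and multiplying these $r-1$ bounds produces $x_{t-s-(r-2)}\cdots x_{t-s}>\gamma x_1^{r-1}$, whence $x_{t-1}>\gamma x_1$ as claimed.

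No genuine obstacle arises beyond the bookkeeping; the argument is entirely parallel to Lemma~\ref{gamma}. The only point that requires care is verifying that the sharper form $r-1$ (instead of a generic $C_0+1$) indeed appears in the denominator of $\gamma$, which is immediate from the substitution $C_0=r-2$ that is legitimate precisely because $T=t$.
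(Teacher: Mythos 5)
Your proof is correct and follows essentially the same route as the paper: the paper states Lemma \ref{gammat} as the $T=t$ specialization of Lemma \ref{gamma}, and your argument is exactly that adaptation — the left-compression/maximality-of-$s$ argument for item 1, the edge/non-edge swap plus optimality for item 2, with Lemma \ref{xkt} (i.e., $C_0=r-2$) replacing Lemma \ref{xk} to produce the sharper factor $1-\frac{r-1}{s+k+1}$ in $\gamma$.
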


\begin{lem} \label{minimum}
Let $\gamma$ be defined in Lemma \ref{gammat}. Consider two functions:
  \begin{align}
    g_1(s)&:={s+1\choose 2}\frac{r\gamma^{-(r-2)}}{(r-3)!}t^{r-3},\\
    g_2(s)&:={t-2\choose r-2}(\gamma^{-(r-2)}-1).
  \end{align}  
  Then
  \begin{equation}
  \label{eq:eta}
  \min\{g_1(s),g_2(s)\}=O(t^{r-\frac7 3}).
\end{equation}
\end{lem}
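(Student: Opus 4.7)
The plan is to prove the bound by asymptotically analyzing the two functions $g_1(s)$ and $g_2(s)$ in $s$ (with $t$ the implicit large parameter), locating the point where they balance, and then doing a two-case argument on whether $s$ lies below or above that balance point.

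First, I would work out the behavior of $\gamma^{-(r-2)}$. Paralleling the expansion \eqref{eq:gamma}, but now with $C_0 = r-2$ so that $C_0+1 = r-1$, one finds
$$\gamma = 1 - \frac{(r-1)^2}{s} + O\!\left(\frac{1}{s^2}\right),$$
and consequently
$$\gamma^{-(r-2)} = 1 + \frac{(r-2)(r-1)^2}{s} + O\!\left(\frac{1}{s^2}\right).$$
From this I extract two quantitative facts, valid for $s \geq s^\ast(r)$: a uniform upper bound $\gamma^{-(r-2)} \leq M(r)$, and a linear decay estimate $\gamma^{-(r-2)} - 1 \leq K(r)/s$. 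For the finitely many smaller values of $s$ at which $\gamma$ is still positive, $\gamma^{-(r-2)}$ is a constant depending only on $r$, so $M$ and $K$ can be enlarged to absorb them.

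Next, I would identify the crossover $s \asymp t^{1/3}$ by setting $g_1(s) \approx g_2(s)$, i.e.\ $s^2 t^{r-3} \approx t^{r-2}/s$, and then split the analysis:

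\emph{Case A: $s \leq t^{1/3}$.} Use $\binom{s+1}{2} = O(s^2)$ together with $\gamma^{-(r-2)} \leq M$ to obtain
$$g_1(s) \leq \binom{s+1}{2} \cdot \frac{r M}{(r-3)!} \, t^{r-3} = O(s^2 t^{r-3}) = O(t^{2/3} \cdot t^{r-3}) = O(t^{r-7/3}).$$

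\emph{Case B: $s > t^{1/3}$.} Use $\gamma^{-(r-2)} - 1 \leq K/s$ to obtain
$$g_2(s) = \binom{t-2}{r-2}\bigl(\gamma^{-(r-2)}-1\bigr) \leq \binom{t-2}{r-2} \cdot \frac{K}{s} \leq \frac{K}{t^{1/3}}\binom{t-2}{r-2} = O(t^{r-2-1/3}) = O(t^{r-7/3}).$$

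Combining the two cases gives $\min\{g_1(s),g_2(s)\} = O(t^{r-7/3})$ uniformly in $s$.

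The only real subtlety is bookkeeping of the constants coming from the Taylor expansion of the product defining $\gamma$, and handling those small $s$ at which that expansion is not yet accurate; these are absorbed into the constants $M(r)$ and $K(r)$ as indicated above, and in the small-$s$ regime $g_1(s) = O(t^{r-3})$ is already far below $t^{r-7/3}$, so no genuine obstacle arises there. The substantive content is purely the balance $s \asymp t^{1/3}$, which explains the exponent $r - 7/3 = (r-3) + 2/3 = (r-2) - 1/3$ in both cases.
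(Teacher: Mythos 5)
Your proposal is correct and follows essentially the same route as the paper: expand $\gamma^{-(r-2)} = 1 + \frac{(r-2)(r-1)^2}{s} + O(s^{-2})$ and split at the balance point $s \asymp t^{1/3}$, bounding $g_1$ for $s \leq t^{1/3}$ and $g_2$ for $s \geq t^{1/3}$, each by $O(t^{r-7/3})$. The extra bookkeeping you add for small $s$ is harmless and does not change the argument.
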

\begin{proof}
 By Equation \eqref{eq:gamma} (with $C_0=r-2$),   we have
  $$\gamma=1-\frac{(r-1)^2}{s}+O\left(\frac{1}{s^2}\right).$$  
  When $s\leq t^{1/3}$, we have
  $$g_1(s)={s+1\choose 2}\frac{r\gamma^{-(r-2)}}{(r-3)!}t^{r-3} =O(t^{r-7/3}).$$
  When $s\geq t^{1/3}$, we have
  \begin{align*}
    g_2(s)&={t-2\choose r-2}(\gamma^{-(r-2)}-1)\\
          &= {t-2\choose r-2}\left(\frac{(r-1)^2(r-2)}{s}+O\left(\frac{1}{s^2}\right)\right)\\
            &=O(t^{r-7/3}).
  \end{align*}
  Thus, Equality \eqref{eq:eta} holds.   
\end{proof}

\begin{proof}[Proof of Theorem \ref{mainthm}:]
  We assume $t\geq t_0$ so that $T=t$ holds. Let $\eta:=\lceil \min\{f(s),g(s)\}\rceil =O(t^{r-7/3})$.
  
\noindent {\bf Claim\refstepcounter{counter}\label{mainclaim}  \arabic{counter}.} For  $t\geq t_0$, if $ {t-1\choose r}\leq m\leq {t\choose r}-{t-2\choose r-2}-\eta$, then $\lambda_r(m)=\frac{{{t-1}\choose r}}{(t-1)^r}$.

We will prove this claim by contradiction. Assume there is a graph $G$ with $m$ edges, where $ {t-1\choose r}\leq m\leq {t\choose r}-{t-2\choose r-2}-\eta$, and $\lambda(G)=\lambda_r(m)>\frac{{{t-1}\choose r}}{(t-1)^r}$.
Let $B$ be any family of $|E_{t-1,t}|$ many non-edges of $G$ which does not contain both $t$ and $t-1$.
This is possible since $G$ has at least ${{t-2}\choose{r-2}}+\eta$ non-edges.
Let $G'$ be an $r$-graph obtained from $G$ by deleting all edges in $E_{t-1,t}$
and adding all $r$-tuples in $B$ as edges. Then $G'$ has exactly $m$ edges.

By Lemma \ref{gammat} item 1, any non-edge in $B$ must intersect $S$ in at least two elements.
For any $\{i,j\}\subseteq S^{(2)}$ with $i<j$, define
$$B_{ij}:=\{\{i_1,\ldots,i_{r-2}\}
\colon \{i_1,\ldots,i_{r-2},i,j\}\in B
~\text{and}~ i_1<\cdots<i_{r-2}<i<j\}.$$
Then we have $$B= \bigcup\limits_{\{i,j\}\subseteq S^{(2)}}\{f\cup \{i,j\}\colon f\in B_{ij}\}.$$
We allow some $B_{ij}$ to be emptysets. (For example, $B_{t-1,t}=\emptyset$.) 

Now, we consider the difference between $w(G',\vec x)$ and $w(G, \vec x)$.
On the one hand,
\begin{align*}
  w(G',\vec x) &= w(G, \vec x) - \sum_{e\in E_{t-1,t}} x_ex_{t-1}x_t+ \sum_{f\in B} x_f\\
               &\geq w(G, \vec x)- \sum_{f\in B}(x_1^{r-2}x_{t-1}x_t-x_f)\\
               &\geq w(G, \vec x)-\sum_{\{i,j\}\in S^{(2)}}\sum_{f'\in B_{ij}}(x_1^{r-2}-x_{f'})x_{t-1}x_t\\
                &>w(G, \vec x)- {s+1\choose 2}\frac{r}{(r-3)!}x_1x_{t-1}x_t ~~~~~~~~~~~~~~~~~~~~~~~~(by~ \eqref{t:beta})\\
                &>w(G, \vec x)- {s+1\choose 2}\frac{r}{(r-3)!}t^{r-3}x_{1}^{r-2}x_{t-1}x_t.  ~~~~~~~~(\mbox{since } x_1\geq \frac{1}{t})\\
               &>w(G, \vec x)- {s+1\choose 2}\frac{r\gamma^{-(r-2)}}{(r-3)!}t^{r-3}x_{t-1}^{r-1}x_t.  ~~~~~~~~~~~~~~~~~~~(by~ \eqref{eq:xt-1})\\
  &=w(G, \vec x)- g_1(s) x_{t-1}^{r-1}x_t.
\end{align*}

On the other hand,
 \begin{align*}
   w(G',\vec x) &= w(G, \vec x) - \sum_{e\in E_{t-1,t}} x_ex_{t-1}x_t+ \sum_{f\in B} x_f\\
    &> w(G, \vec x)- \sum_{f\in B}(x_1^{r-2}x_{t-1}x_t-x_{t-1}^{r-2}x_{t-1}x_t)\\
               &> w(G, \vec x)-{t-2\choose r-2}(x_1^{r-2}-x_{t-1}^{r-2})x_{t-1}x_t\\
               &=w(G, \vec x)- {t-2\choose r-2}(\gamma^{-(r-2)}-1)x_{t-1}^{r-1}x_t ~~~~~~~~~~~~~~~~~~~~~~~~~~~~(by~ \eqref{eq:xt-1})\\
    &=w(G, \vec x)- g_2(s) x_{t-1}^{r-1}x_t.
\end{align*}
Thus, we have
\begin{equation}
  \label{eq:diff}
   w(G',\vec x) >  w(G, \vec x) -\min\{g_1(s), g_2(s)\} x_{t-1}^{r-1}x_t \geq w(G, \vec x) - \eta x_{t-1}^{r-1}x_t.
\end{equation}

Note that $G'$ does not cover $\{t-1,t\}$ and $G'$ still has $\eta$ non-edges which does not contain
both $t-1$ and $t$. Let $G''$ be an $r$-graph obtained from $G'$ by adding these $\eta$ $r$-tuples
as edges. We have
\begin{align*}
  w(G'',\vec x) &\geq w(G',\vec x) +\eta x_{t-r+1}x_{t-r+2}\cdots x_t\\
                &> w(G, \vec x) - \eta x_{t-1}^{r-1}x_t + \eta x_{t-r+1}x_{t-r+2}\cdots x_t\\
                &>w(G, \vec x).
\end{align*}
Note that $G''$ still does not cover the pair $\{t-1,t\}$. We have
 $$\lambda(G)=w(G,\vec x) <w(G'',\vec x)\leq \lambda(G'')\leq\lambda([t-1]^{(r)}),$$
a contradiction. Claim \ref{mainclaim} is proved.

Finally we can choose a constant $\delta_r$ large enough so that the following two conditions hold:
\begin{itemize}
\item $\delta_r t^{r-7/3}>\eta$ for all $t\geq t_0$,
\item and $\delta_r t^{r-7/3}>{t-2 \choose r-1}$ for $1\leq t\leq t_0.$
\end{itemize}
When $t\geq t_0$, we have
$$\lambda_r(m)\leq \lambda\left(C_{r, {t\choose r}-{t-2\choose r-2}-\delta_rt^{r-7/3}}\right) \leq \lambda\left(C_{r,{t\choose r}-{t-2\choose r-2}-\eta}\right)=\frac{{t-1\choose r}}{(t-1)^r}.$$
When $1\leq t\leq t_0$, we have
$$\lambda_r(m)\leq \lambda\left(C_{r,{t-1\choose r}}\right)= \frac{{t-1\choose r}}{(t-1)^r}.$$
Since $m\geq {t-1\choose r}$, we have
$$\lambda_r(m)\geq \lambda_r\left({t-1\choose r}\right)=\frac{{t-1\choose r}}{(t-1)^r}.$$
Thus, $$\lambda_r(m)=\frac{{t-1\choose r}}{(t-1)^r}.$$
This completes the proof of Theorem~\ref{mainthm}.

  \end{proof}

\end{document}